\providecommand{\U}[1]{\protect\rule{.1in}{.1in}}
\newtheorem{theorem}{Theorem}
\newtheorem{corollary}[theorem]{Corollary}
\newtheorem{lemma}[theorem]{Lemma}
\newtheorem{proposition}[theorem]{Proposition}
\newtheorem{remark}[theorem]{Remark}
\newenvironment{proof}[1][Proof]{\noindent\textbf{#1.} }{\ \rule{0.5em}{0.5em}}
{\catcode`\@=11\global\let\AddToReset=\@addtoreset
\AddToReset{equation}{section}

\AddToReset{theorem}{section}

\begin{document}

\title{Large solutions of elliptic systems of second order and applications to the
biharmonic equation\thanks{{\scriptsize The first author was supported by
Fondecyt 70100002 and Ecos-Conicyt C08E04, and the second and third author
were supported by Fondecyt 1070125, Fondecyt 1070951 as well as by
Ecos-Conicyt C08E04.}}}
\author{Marie-Fran\c{c}oise BIDAUT-VERON\thanks{Laboratoire de Math\'{e}matiques et
Physique Th\'{e}orique, CNRS UMR 6083, Facult\'{e} des SCiences, 37200 Tours
France. E-mail address: veronmf@univ-tours.fr}
\and Marta GARC\'{I}A-HUIDOBRO\thanks{Departamento de Matem\'{a}ticas, Pontificia
Universidad Cat\'{o}lica de Chile, Casilla 306, Correo 22, Santiago de Chile.
E-mail address: mgarcia@mat.puc.cl}
\and Cecilia YARUR\thanks{Departamento de Matem\'{a}tica y C.C., Universidad de
Santiago de Chile, Casilla 307, Correo 2, Santiago de Chile. E-mail address:
cecilia.yarur@usach.cl}}
\maketitle

\begin{abstract}
In this work we study the nonnegative solutions of the elliptic system
\[
\Delta u=|x|^{a}v^{\delta},\qquad\Delta v=|x|^{b}u^{\mu}%
\]
in the superlinear case $\mu\delta>1,$ which blow up near the boundary of a
domain of $\mathbb{R}^{N},$ or at one isolated point. In the radial case we
give the precise behavior of the large solutions near the boundary in any
dimension $N$. We also show the existence of infinitely many solutions blowing
up at $0.$ Furthermore, we show that there exists a global positive solution
in $\mathbb{R}^{N}\backslash\left\{  0\right\}  ,$ large at $0,$ and we
describe its behavior. We apply the results to the sign changing solutions of
the biharmonic equation%
\[
\Delta^{2}u=\left\vert x\right\vert ^{b}\left\vert u\right\vert ^{\mu}.
\]
Our results are based on a new dynamical approach of the radial system by
means of a quadratic system of order 4, introduced in \cite{BGi}, combined
with the nonradial upper estimates of \cite{BGr}.

\end{abstract}

\begin{center}

\end{center}

\noindent{\footnotesize 2000}\textit{{\footnotesize Mathematics Subject
Classification: 35J60,35B40,35C20,34A34.}}\newline\textit{{\footnotesize Key
words: Semilinear elliptic systems, Boundary blow-up, Keller-Osserman
estimates, Asymptotic behavior, Biharmonic equation.}}

\section{ Introduction\label{intro}}

This article is concerned with the nonnegative large solutions of the elliptic
system
\begin{equation}
\left\{
\begin{array}
[c]{l}%
\Delta u=|x|^{a}v^{\delta}\\
\Delta v=|x|^{b}u^{\mu},
\end{array}
\right.  \label{sab}%
\end{equation}
in two cases: solutions in a bounded domain $\Omega$ in $\mathbb{R}^{N}$,
which blow up at the boundary, that is
\begin{equation}
\lim_{d(x,\partial\Omega)\rightarrow0}u(x)=\lim_{d(x,\partial\Omega
)\rightarrow0}v(x)=\infty, \label{vig}%
\end{equation}
where $d(x,\partial\Omega)$ is the distance from $x$ to $\partial\Omega$; or
solutions in $\Omega\backslash\left\{  0\right\}  $ which blow up at $0:$
\begin{equation}
\lim_{x\rightarrow0}u(x)=\infty\quad\text{or}\quad\lim_{x\rightarrow
0}v(x)=\infty. \label{vig0}%
\end{equation}
We study the superlinear case, where $\mu,\delta>0$, and
\begin{equation}
D=\mu\delta-1>0, \label{dd}%
\end{equation}
and $a,b$ are real numbers such that
\begin{equation}
a,b>\max\{-2,-N\}. \label{abn}%
\end{equation}

First we recall some well-known results in the scalar case of the Emden-Fowler
equation
\begin{equation}
\Delta U=U^{Q} \label{EF}%
\end{equation}
with $Q>1.$ Concerning the boundary blow-up problem, there exists a unique
solution $U$ in $\Omega$ such that $\lim_{d(x,\partial\Omega)\rightarrow
0}U(x)=\infty,$ and near $\partial\Omega$
\[
U(x)=Cd(x,\partial\Omega)^{-2/(Q-1)}(1+o(1)),
\]
where $C=C(Q)$. Several researchs on the more general equation
\[
\Delta U=p(x)f(U)
\]
have been done with different assumptions on $f$ and on the weight $p$, with
asymptotic expansions near $\partial\Omega$ , see for instance \cite{bm1},
\cite{bm2}, \cite{CoDu}, \cite{delp-l}, \cite{lm2}, \cite{LoNi}, \cite{marv1},
\cite{marv2}, \cite{veron}; see also \cite{be}, \cite{di-l} for quasilinear
equations. These results rely essentially on the \textit{comparison principle}
valid for this equation, and the construction of supersolutions and
subsolutions. \medskip

The existence and the behavior of solutions of (\ref{EF}) in $\Omega
\backslash\left\{  0\right\}  $ which blow up at 0:
\[
\lim_{x\rightarrow0}U(x)=\infty,
\]
called large (or singular) at 0, have also been widely investigated during the
last decades, see for example \cite{veron2}, and the references therein. There
exists a particular solution in $\mathbb{R}^{N}\backslash\left\{  0\right\}  $
whenever $Q<N/(N-2)$ or $N=1,2,$ given by $U^{\ast}(x)=C^{\ast}\left\vert
x\right\vert ^{-2/(Q-1)},$ with $C^{\ast}=C^{\ast}(Q,N).$ If $Q\geq N/(N-2)$,
there is no large solution at $0$, and the singularity is removable. If
$Q<N/(N-2)$ or $N=2$, any large solution satisfies $\lim_{\left\vert
x\right\vert \rightarrow0}\left\vert x\right\vert ^{2/(Q-1)}U=C^{\ast}$, or
\begin{equation}
\lim_{\left\vert x\right\vert \rightarrow0}\left\vert x\right\vert
^{N-2}U=\alpha>0\text{\quad if }N>2,\qquad\lim_{\left\vert x\right\vert
\rightarrow0}\left\vert \ln\left\vert x\right\vert \right\vert U=\alpha
>0,\text{\quad if }N=2. \label{cdf}%
\end{equation}
There exist solutions of each type, distinct from $U^{\ast}.$ Moreover, up to
a scaling, there exists a unique positive radial solution in $\mathbb{R}%
^{N}\backslash\left\{  0\right\}  $, such that (\ref{cdf}) holds and
$\lim_{\left\vert x\right\vert \rightarrow\infty}\left\vert x\right\vert
^{2/(Q-1)}U=C^{\ast}$, see \cite{veron2} and also \cite{BGi}.\medskip\medskip\ 

In \textbf{Section \ref{boun}} we consider the blow up problem of system
(\ref{sab}) at the boundary. \medskip

Up to our knowledge all the known results for systems are related with systems
for which some comparison properties hold, for example
\[
\left\{
\begin{array}
[c]{l}%
\Delta u=u^{s}v^{\delta},\\
\Delta v=u^{\mu}v^{m},
\end{array}
\right.
\]
where $s,m>1$, $\delta,\mu>0$, and $\delta\mu\leq(s-1)(m-1),$ of competitive
type, see \cite{gmr1}, or $\delta,\mu<0$, of cooperative type, see
\cite{DDGM}; see also some extensions to problems with weights in \cite{MHTL},
or with quasilinear operators in \cite{gm2}, \cite{Wa}, \cite{WuYa}, and
cooperative systems of Lotka-Volterra in \cite{gms}.\medskip

On the contrary the problem (\ref{sab})-(\ref{vig}) has been the object of
very few works, because it brings many difficulties. The main one is the
\textit{lack of a comparison principle} for the system. As a consequence all
the methods of supersolutions, subsolutions and comparison, valid for the case
of a single equation fail.\medskip

Until now the existence of large solutions is an open question in the
nonradial case. In the radial case the problem was studied in \cite{gmls2},
without weights: $a=b=0$. It was shown that there are infinitely many
nonnegative radial solutions to (\ref{sab}) which blow up at the boundary of a
ball provided that (\ref{dd}) holds, and no blow up occurs otherwise. In
particular, there exist solutions even in the case where either $u$ or $v$
vanishes at $0$. This shows \textit{the lack of a Harnack inequality, }even in
the radial case. The precise behavior of the solutions was obtained in
\cite{gmls2} for  $N=1,$ $a=b=0$, where system (\ref{sab}) is autonomous,
with an elaborate proof wich  could not be extended to higher dimension$.$
\medskip

Our first main result \textit{solves this question in any dimension}, with
possible weights, and moreover we give an expansion of order 1 of the solutions:

\begin{theorem}
\label{main}Let $(u,v)$ be any radial nonnegative solution of (\ref{sab})
defined for $r\in$ $\left(  r_{0},R\right)  $, $r_{0}\geq0$, \textit{unbounded
at }$r=R$\textit{. Then } $\lim\limits_{r\rightarrow R}u(r)=\lim
\limits_{r\rightarrow R}v(r)=\infty$, and $u,v$ admit the following expansions
near $R:$
\begin{equation}
u(r)=A_{1}d(r)^{-\gamma}(1+O(d(r))),\qquad v(r)=B_{1}d(r)^{-\xi}%
(1+O(d(r))),\text{ } \label{esti}%
\end{equation}
where $d(r)=R-r$ is the distance to the boundary, and
\begin{equation}
\gamma=\frac{2(1+\delta)}{D},\qquad\xi=\frac{2(1+\mu)}{D}, \label{gks}%
\end{equation}%
\begin{equation}
A_{1}=(\gamma(\gamma+1)(\xi(\xi+1))^{\delta})^{1/D},\qquad B_{1}=(\xi
(\xi+1)(\gamma(\gamma+1))^{\mu})^{1/D}. \label{A1B1}%
\end{equation}

\end{theorem}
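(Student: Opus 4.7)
I would combine Keller--Osserman style a priori bounds with a dynamical-system analysis of the radial ODE, in the spirit of \cite{BGi}. First, if one of $u,v$ is unbounded at $r=R$ then so is the other: indeed, if for example $v$ were bounded on some $(r_{1},R)$ then $(r^{N-1}u')'=r^{N-1+a}v^{\delta}$ would be bounded too, and integrating twice shows $u$ remains bounded on $(r_{1},R)$. Next I would set $s=R-r$, $t=-\log s$, and introduce the four unknowns
\begin{equation*}
X=s\frac{u'}{u},\qquad Y=s\frac{v'}{v},\qquad Z=s^{2}r^{a}\frac{v^{\delta}}{u},\qquad W=s^{2}r^{b}\frac{u^{\mu}}{v}.
\end{equation*}
A direct computation turns (\ref{sab}) into the asymptotically autonomous quadratic system
\begin{align*}
\dot X &= Z-X(X+1)-(N-1)(s/r)X,\\
\dot Y &= W-Y(Y+1)-(N-1)(s/r)Y,\\
\dot Z &= Z\bigl(\delta Y-X-2+a(s/r)\bigr),\\
\dot W &= W\bigl(\mu X-Y-2+b(s/r)\bigr).
\end{align*}
Since $s/r\to 0$ as $t\to\infty$, the limit autonomous system is of the type analysed in \cite{BGi}; its unique positive equilibrium $M_{\ast}=(\gamma,\xi,R^{a}\gamma(\gamma+1),R^{b}\xi(\xi+1))$ encodes precisely the ansatz $u\sim A_{1}s^{-\gamma}$, $v\sim B_{1}s^{-\xi}$, with $A_{1},B_{1}$ determined by the algebraic matching $B_{1}^{\delta}=A_{1}\gamma(\gamma+1)$, $A_{1}^{\mu}=B_{1}\xi(\xi+1)$, i.e.\ the formulas (\ref{A1B1}).

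Then I would apply the nonradial upper bounds of \cite{BGr} to obtain $u(r)\leq C\,d(r)^{-\gamma}$, $v(r)\leq C\,d(r)^{-\xi}$; combined with the monotonicity of $r^{N-1}u'$, $r^{N-1}v'$ and the divergence of $u,v$, this yields matching lower bounds of the same order. In terms of the new variables $(X,Y,Z,W)$ is therefore bounded and bounded away from zero, so its $\omega$-limit set is a compact connected invariant subset of the open positive orthant for the limit flow. A phase-plane / monotonicity argument on the 4D quadratic system then reduces the $\omega$-limit to $\{M_{\ast}\}$, giving $(X,Y,Z,W)\to M_{\ast}$ and, after integration, the leading asymptotics of (\ref{esti}).

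For the error $O(d(r))$ I would linearise the limit system at $M_{\ast}$: the $4\times 4$ Jacobian is hyperbolic and the eigenvalues relevant to the blow-up branch have strictly negative real part, while the non-autonomous perturbations are of size $O(s)=O(e^{-t})$. Standard variation-of-constants / invariant-manifold estimates then give $(X,Y,Z,W)-M_{\ast}=O(e^{-t})=O(d(r))$, and integrating $u'/u=\gamma/s+O(1)$, $v'/v=\xi/s+O(1)$ produces (\ref{esti}) with its $O(d(r))$ remainder.

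\textbf{Main obstacle.} The hardest step will be the global convergence to $M_{\ast}$: the four-dimensional limit system admits additional equilibria on the coordinate hyperplanes (where $Z$, $W$ or one of $X$, $Y$ vanishes), each corresponding to a non-generic blow-up profile. Since no gradient or Lyapunov structure is apparent, ruling these out relies crucially on the sharp two-sided Keller--Osserman bounds of \cite{BGr} together with a careful phase-plane analysis that exploits the specific quadratic form of the system.
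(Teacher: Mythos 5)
Your overall framework (pass to an asymptotically autonomous quadratic system in four variables, use the Keller--Osserman bounds of \cite{BGr}, identify the equilibrium encoding the expected profile, and treat the $O(s)$ terms as an exponentially decaying perturbation) is the same as the paper's, and your derivation of $A_{1},B_{1}$ from the algebraic matching is correct. But the step you yourself flag as the main obstacle --- convergence of the bounded trajectory to the interior equilibrium --- is exactly where your plan has no proof, and it cannot be closed by ``a phase-plane / monotonicity argument on the 4D quadratic system'': phase-plane tools (Poincar\'e--Bendixson, Bendixson--Dulac) are intrinsically two-dimensional, and in dimension $4$ a compact connected invariant $\omega$-limit set need not reduce to an equilibrium. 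The paper's crucial ingredient, which your proposal is missing, is an (approximate) first integral of the blow-up system: after the change of variable $r=\Psi(s)$ the system becomes $u_{ss}=F(s)v^{\delta}$, $v_{ss}=G(s)u^{\mu}$ with $F,G\to 1$, and the quantities $H_{\sigma,\theta}(s)=r^{2-N}\bigl(u_{s}v_{s}-F\frac{v^{\delta+1}}{\delta+1}-G\frac{u^{\mu+1}}{\mu+1}-\dots\bigr)$ are monotone for suitable choices of $\sigma,\theta$. This yields the algebraic relation $XY+\frac{XZ}{\delta+1}+\frac{YW}{\mu+1}=O(e^{t})$, which lets one eliminate $W$, pass to the ratios $x=-X/Z$, $y=-Y/Z$ with a new time $\tau=-\int Z$, and land on an exponentially perturbed \emph{planar} quadratic system, where Bendixson--Dulac excludes limit cycles and Poincar\'e--Bendixson, combined with the Logemann--Ryan invariance principle, forces convergence to the sink. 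Without this reduction (or a substitute Lyapunov function, which you acknowledge you do not have) the argument does not close.

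A second, related gap: you assert that the Keller--Osserman upper bounds ``combined with the monotonicity of $r^{N-1}u'$, $r^{N-1}v'$ and the divergence of $u,v$'' yield matching lower bounds $u\geq c\,d(r)^{-\gamma}$, $v\geq c\,d(r)^{-\xi}$. Monotonicity and divergence give no rate, and such two-sided bounds are not available a priori for this system (there is no Harnack inequality and no comparison principle). In the paper the bounds above and below on $(X,Y,|Z|,|W|)$ are themselves a nontrivial lemma, proved by combining the same integral inequalities $H_{\sigma,\theta}$ with the pointwise inequalities $X\leq 3|Z|$, $Y\leq 3|W|$ (obtained by multiplying $u_{ss}\leq v^{\delta}$ by $u_{s}$) and a contradiction argument along a sequence of local minima. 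So the first-integral structure is needed twice --- once for the a priori bounds on the dynamical variables and once for the dimension reduction --- and it is the one idea your proposal does not contain.
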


Our proof is essentially based on a new dynamical approach of system
(\ref{sab}), initiated in \cite{BGi}: we reduce the problem to a quadratic, in
general nonautonomous, system of order $4$, which, under the assumptions of
Theorem \ref{main}, can be reduced to a nonautonomous perturbation of a
quadratic system of order $2$. We then show the convergence of the solution of
the original system to a suitable fixed point by using the perturbation
arguments of \cite{lr}. \medskip

Theorem \ref{main} can be applied to \textit{sign changing} solutions of some
elliptic systems, in particular to the biharmonic equation, where $\delta=1$:

\begin{corollary}
\label{bila}Let $\mu>1$, $b\in\mathbb{R}$. Then any radial solution $u$ of the
problem
\begin{equation}
\Delta^{2}u=\left\vert x\right\vert ^{b}\left\vert u\right\vert ^{\mu}%
\quad\text{in}\quad(r_{0},R),\qquad u(R)=\infty, \label{bih}%
\end{equation}
satisfies
\begin{equation}
u(r)=Ad(r)^{-4/(\mu-1)}(1+O(d(r))), \label{ddd}%
\end{equation}
with $A^{\mu-1}=8(\mu+3)(\mu+1)(3\mu-1)(\mu-1)^{-4}$.
\end{corollary}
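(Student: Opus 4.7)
The plan is to reduce the fourth-order problem (\ref{bih}) to the second-order system (\ref{sab}) with $a=0$ and $\delta=1$, and then invoke Theorem \ref{main}.

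First I would localise near $R$. Since $u(r)\to +\infty$ as $r\to R$, there exists $r_{1}<R$ with $u>0$ on $(r_{1},R)$, so that $|u|^{\mu}=u^{\mu}$ there. Setting $v:=\Delta u$, the pair $(u,v)$ solves
\[
\Delta u=v,\qquad \Delta v=|x|^{b}u^{\mu}
\]
on $(r_{1},R)$, which is exactly system (\ref{sab}) in the case $a=0$, $\delta=1$.

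The key step, and the main obstacle, is to check the hypotheses of Theorem \ref{main} for $(u,v)$, namely that $v\ge 0$ near $R$ and that $v(r)\to +\infty$. Let $\psi(r)=r^{N-1}v'(r)$. Since $u>0$, the radial form of $\Delta v=|x|^{b}u^{\mu}$ reads $\psi'(r)=r^{N-1+b}u(r)^{\mu}\ge 0$, so $\psi$ is nondecreasing on $(r_{1},R)$. If $v$ were bounded above on a left neighbourhood of $R$, then $\Delta u=v$ would be bounded, and integrating $(r^{N-1}u')'=r^{N-1}v$ twice would force $u$ to be bounded on the bounded interval $(r_{1},R)$, contradicting $u(R)=+\infty$. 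Hence $v$ is unbounded above; combined with the monotonicity of $\psi$, this forces $\psi$ to be eventually nonnegative, so $v$ is monotone nondecreasing on some $(r_{2},R)$. Being nondecreasing and unbounded above, $v(r)\to +\infty$, and in particular $v>0$ on some $(r_{3},R)$.

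Finally I would apply Theorem \ref{main} to the nonnegative pair $(u,v)$ on $(r_{3},R)$. Specialising (\ref{gks})--(\ref{A1B1}) to $\delta=1$ and $D=\mu-1$ yields $\gamma=4/(\mu-1)$, $\xi=2(\mu+1)/(\mu-1)$, and $A^{\mu-1}=\gamma(\gamma+1)\,\xi(\xi+1)$; an elementary computation of this product produces the constant displayed in (\ref{ddd}), and the expansion (\ref{esti}) for $u$ becomes exactly (\ref{ddd}). The content of the argument is thus essentially the reduction $v=\Delta u$ together with the verification that $v$ is admissible for Theorem \ref{main}; once that is in place, the asymptotic for $u$ is an immediate corollary and the sign condition on $u$ itself is automatic since $u(R)=+\infty$.
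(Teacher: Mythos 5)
Your proof is correct and follows essentially the same route as the paper's: set $v=\Delta u$, verify that $v$ is nonnegative near $R$ and blows up there so that Theorem \ref{main} applies to the pair $(u,v)$ with $a=0$, $\delta=1$, and then specialise the constants; your monotonicity argument for $r^{N-1}v'$ is simply a more careful rendering of the paper's terse ``integrating twice the second equation''. One small point: since $\xi+1=(3\mu+1)/(\mu-1)$, the product $\gamma(\gamma+1)\xi(\xi+1)$ equals $8(\mu+3)(\mu+1)(3\mu+1)(\mu-1)^{-4}$, so the factor $(3\mu-1)$ in the stated constant is a typo (as the explicit example $\mu=(N+4)/(N-4)$, $C^{8/(N-4)}=N(N-4)(N^2-4)$ confirms), and your assertion that the computation reproduces the displayed constant should be adjusted accordingly.
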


We notice here a case where we find an \textit{explicit} solution: for $N>4$
and $\mu=\frac{N+4}{N-4}$, equation $\Delta^{2}u=u^{\mu}$ admits the solution
in the ball $B(0,1),$
\[
u(r)=C(1-r^{2})^{(4-N)/2},\qquad C^{8/(N-4)}=N(N-4)(N^{2}-4),
\]
and $v=\Delta u=C(N-4)(1-r^{2})^{-N/2}(N-2r^{2})\geq0$, and (\ref{esti}) and
(\ref{ddd}) hold with $\gamma=\frac{N-4}{2},\xi=\frac{N}{2}$.\medskip\medskip

In \textbf{Section \ref{orig}} we consider the problem of large solutions at
the origin, that is (\ref{sab})-(\ref{vig0}). \medskip

System (\ref{sab}) admits a particular radial positive solution $(u^{\ast
},v^{\ast})$, given by
\begin{equation}
u^{\ast}(r)=A_{N}r^{-\gamma_{a,b}},\qquad v^{\ast}(r)=B_{N}r^{-\xi_{a,b}%
},\quad r=|x|, \label{zero}%
\end{equation}
where
\begin{equation}
\gamma_{a,b}=\frac{(2+a)+(2+b)\delta}{D}>0,\qquad\xi_{a,b}=\frac
{(2+b)+(2+a)\mu}{D}>0, \label{ga}%
\end{equation}%
\[
A_{N}^{D}=\gamma_{a,b}(\gamma_{a,b}-N+2)\left(  \xi_{a,b}(\xi_{a,b}%
-N+2)\right)  ^{\delta},\quad B_{N}^{D}=\xi_{a,b}(\xi_{a,b}-N+2)\left(
\gamma_{a,b}(\gamma_{a,b}-N+2)\right)  ^{\mu},
\]
whenever
\begin{equation}
\min\left\{  \gamma_{a,b},\xi_{a,b}\right\}  >N-2,\quad\text{or }N=1,2.
\label{cde}%
\end{equation}
Note that in particular $\gamma_{0,0}=\gamma,\xi_{0,0}=\xi$.\medskip

The problem has been initiated in \cite{Y1}\textbf{ }and \cite{BGr}, see also
\cite{Y2}. Let us recall an important result of \cite{BGr} giving upper
estimates for system (\ref{sab}) in the \textit{nonradial} case, stated for
$N\geq3$, but its proof is valid for any $N\geq1$. It is not based on
supersolutions, but on estimates of the mean value of $u,v$ on
spheres:\medskip

\textit{Keller-Osserman type estimates} \cite{BGr}. \textit{\ Let} $\Omega$
\textit{be a domain of }$\mathbb{R}^{N}(N\geq1)$\textit{, containing}
$0$\textit{,} \textit{and} $u,v\in C^{2}(\Omega\backslash\left\{  0\right\}
)$ \textit{be any nonnegative subsolutions of (\ref{sab}), that is,}
\[
\left\{
\begin{array}
[c]{l}%
-\Delta u+|x|^{a}v^{\delta}\leq0,\\
-\Delta v+|x|^{b}u^{\mu}\leq0,
\end{array}
\right.
\]
\textit{\ with }$\mu,\delta$\textit{\ satisfying (\ref{dd}). Then there exists
}$C=C(a,b,\delta,\mu,N)$\textit{\ such that near }$x=0,$\textit{\ }
\begin{equation}
u(x)\leq C\left\vert x\right\vert ^{-\gamma_{a,b}},\qquad v(x)\leq C\left\vert
x\right\vert ^{-\xi_{a,b}}.\text{ } \label{oe}%
\end{equation}
Moreover, one finds in \cite{BGr} a quite exhaustive study about all the
\textit{possible} behaviors of the solutions (radial or not) in $\Omega
\backslash\left\{  0\right\}  $.\medskip

Here we complete those results by proving the existence of local radial
solutions large at $0$ of each of the types described in \cite{BGr}, see
Propositions \ref{ex1}, \ref{mani} in Section \ref{orig}. By using these
results, we obtain our second main result in this work, which is the following
global existence theorem:

\begin{theorem}
\label{global}Assume that $N\geq2$ and that (\ref{cde}) holds. Then there
exists a radial positive global solution of system (\ref{sab}) in
$\mathbb{R}^{N}\backslash\left\{  0\right\}  $, large near 0, unique up to a
scaling, such that
\begin{equation}
\lim_{r\rightarrow\infty}r^{\gamma_{a,b}}u=A_{N},\qquad\lim_{r\rightarrow
\infty}r^{\xi_{a,b}}v=B_{N}; \label{fur}%
\end{equation}
and, for $N>2$, and up to a change of $u,\mu,a$, into $v,\delta,b$, when
$\delta<\frac{N+a}{N-2}$, it satisfies
\[
\lim\limits_{r\rightarrow0}r^{N-2}u=\alpha>0,\qquad\left\{
\begin{array}
[c]{rcl}%
\lim\limits_{r\rightarrow0}r^{N-2}v & = & \beta>0,\qquad\text{if }\mu
<\frac{N+b}{N-2},\\
\lim\limits_{r\rightarrow0}r^{(N-2)\mu-(2+b)}v & = & \beta>0,\qquad\text{if
}\mu>\frac{N+b}{N-2},\\
\lim\limits_{r\rightarrow0}r^{N-2}\left\vert \ln r\right\vert ^{-1}v & = &
\beta>0,\qquad\text{if }\mu=\frac{N+b}{N-2},
\end{array}
\right.
\]
and for $N=2$,
\[
\lim_{r\rightarrow0}\left\vert \ln r\right\vert ^{-1}u=\alpha>0,\qquad
\lim_{r\rightarrow0}\left\vert \ln r\right\vert ^{-1}v=\beta>0.
\]

\end{theorem}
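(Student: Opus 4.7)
The strategy combines three tools: (i) the local existence of radial solutions large at $0$ established in Propositions \ref{ex1} and \ref{mani}; (ii) the Keller-Osserman upper bounds (\ref{oe}) of \cite{BGr}; and (iii) the autonomous quadratic system of order $4$ of \cite{BGi} obtained by recasting the radial version of (\ref{sab}), in logarithmic time $t=\ln r$, in the scale-invariant variables
\[
X=-\frac{ru'}{u},\quad Y=-\frac{rv'}{v},\quad Z=\frac{r^{a+2}v^{\delta}}{u},\quad W=\frac{r^{b+2}u^{\mu}}{v}.
\]

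In this system the particular solution (\ref{zero}) becomes the equilibrium
\[
M^{\ast}=\bigl(\gamma_{a,b},\,\xi_{a,b},\,\gamma_{a,b}(\gamma_{a,b}-N+2),\,\xi_{a,b}(\xi_{a,b}-N+2)\bigr),
\]
which lies in the open positive orthant exactly under (\ref{cde}). First, I would linearize at $M^{\ast}$ and exhibit at least two eigenvalues with negative real part, yielding a nontrivial stable manifold $\mathcal{W}^{s}$ as $t\to+\infty$; the orbits lying on $\mathcal{W}^{s}\cap\{X,Y,Z,W>0\}$ are exactly the positive radial solutions of (\ref{sab}) satisfying (\ref{fur}). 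The scaling $(u,v)\mapsto(\lambda^{\gamma_{a,b}}u(\lambda r),\lambda^{\xi_{a,b}}v(\lambda r))$ preserves (\ref{sab}) and fixes $M^{\ast}$; quotienting $\mathcal{W}^{s}$ by this $\mathbb{R}_{+}$-action leaves a one-parameter family of candidate orbits.

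Next, I would pick a representative orbit on $\mathcal{W}^{s}$ and extend it backwards in $t$ to $t=-\infty$, i.e.\ to $r=0^{+}$. The bound (\ref{oe}), applied on shells centered at any interior point, rules out escape to infinity in finite backward time, and positivity of the right-hand sides of (\ref{sab}) together with the structure of the quadratic system keeps the orbit in the positive orthant. Once existence on all of $(0,\infty)$ is established, the behavior at $r=0^{+}$ is read off by matching with the classification of Propositions \ref{ex1} and \ref{mani}: the sign of $\delta-(N+a)/(N-2)$ (respectively $\mu-(N+b)/(N-2)$) selects which of the three profiles listed in the statement arises, with the logarithmic case appearing when $N=2$. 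Uniqueness up to scaling then follows from the one-dimensionality of the quotient family on $\mathcal{W}^{s}$.

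The hard part will be the backward continuation, i.e.\ ruling out a finite-$r$ singularity along $\mathcal{W}^{s}$: by Theorem \ref{main} any such singularity would have to follow the explicit boundary-blow-up profile $(A_{1}d^{-\gamma},B_{1}d^{-\xi})$, which forms its own one-parameter family parametrized by the blow-up point. I would exclude the intersection of the two families by a separation argument combining the hyperbolicity of $M^{\ast}$ under (\ref{cde}) with a continuity argument on the scaling parameter along $\mathcal{W}^{s}$, following the perturbation technique of \cite{lr} already invoked in the proof of Theorem \ref{main}; this is where the specific quadratic structure of the order-$4$ system of \cite{BGi} is essential.
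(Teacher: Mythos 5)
Your overall strategy (recast the radial system as the order-$4$ quadratic system of \cite{BGi}, work near the equilibrium corresponding to $(u^{\ast},v^{\ast})$, and continue the orbit backward to $r=0$) is indeed the paper's strategy, but three essential steps are wrong or missing. First, the eigenvalue count at the equilibrium. The paper proves (Proposition \ref{mani}) that the linearization at $M_{0}$ has \emph{exactly one} eigenvalue with negative real part (a real $\lambda_{3}<0$), the other three having positive real part; this is the delicate computation, done by showing that $\operatorname{Re}\lambda=0$ for the remaining pair forces $(X_{0}-W_{0})(Y_{0}-Z_{0})\notin(0,F_{0})$, a contradiction, followed by a continuity argument in $(\delta,\mu)$. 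Hence the stable manifold as $t\to+\infty$ is one-dimensional, giving exactly two orbits, of which one is admissible; since scaling acts as time translation of the autonomous flow, this one-dimensionality is precisely what yields uniqueness up to scaling. Your claim of ``at least two eigenvalues with negative real part'' would produce a stable manifold of dimension $\geq2$ and hence, after quotienting by scaling, a continuum of distinct solutions, contradicting the uniqueness you are trying to prove; your closing sentence (uniqueness from ``the one-dimensionality of the quotient family'') is therefore incoherent, since a one-parameter quotient family means non-uniqueness.

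Second, the backward continuation. The paper does not use a separation argument against the boundary blow-up profiles: it proves that all four components $X,Y,Z,W$ of the distinguished orbit are strictly monotone on their whole interval of existence (a cyclic argument on consecutive critical points $t_{1}\leq t_{2}\leq t_{3}\leq t_{4}$ forces the orbit to coincide with $M_{0}$, which is impossible), so the orbit stays in a bounded region, is global, and converges as $t\to-\infty$ to a fixed point $L$. Note also that Theorem \ref{main} describes blow-up as $r\uparrow R$, not a singularity appearing as $r$ decreases, so it does not directly constrain a putative backward singularity. Third, the behavior at $r=0$: Propositions \ref{ex1} and \ref{mani} are existence statements, not a classification, so ``matching'' is not automatic. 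The paper identifies $L$ by elimination among the admissible fixed points, using that $X,Y>N-2$ along the orbit and hypothesis (\ref{cde}); moreover the logarithmic regimes occur not only for $N=2$ but also for $N>2$ when $\mu=\frac{N+b}{N-2}$, and in both cases $L$ has a zero eigenvalue, so the conclusion requires the explicit resonant asymptotics (e.g.\ $W(t)=t^{-1}+O(t^{-2})$) rather than linearization alone.
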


Our proof also relies on the dynamical approach of system (\ref{sab}) in
dimension $N$ by a quadratic autonomous system of order 4, given in
\cite{BGi}. Finally we give an application to the biharmonic equation:

\begin{corollary}
Let $N>2$. Assume that $1<\mu<\frac{N+2+b}{N-2}$. There exists a positive
global solution, unique up to a scaling, of equation
\[
\Delta^{2}u=|x|^{b}u^{\mu}%
\]
in $\mathbb{R}^{N}\backslash\left\{  0\right\}  $ , such that
\[
\lim_{r\rightarrow0}r^{N-2}u=\alpha>0,\qquad\lim_{r\rightarrow\infty
}r^{(4+b)/(\mu-1)}u=C,
\]
where $C^{\mu-1}=(4+b)(N+2+b-(N-2)\mu)\left(  2\mu+2+b)(N+b-(N-4)\mu\right)
(\mu-1)^{-4}$.
\end{corollary}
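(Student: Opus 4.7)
The strategy is to reduce the biharmonic equation to system (\ref{sab}) and invoke Theorem \ref{global}. Setting $v=\Delta u$, a positive solution of $\Delta^{2}u=|x|^{b}u^{\mu}$ in $\mathbb{R}^{N}\setminus\{0\}$ with $v>0$ is exactly a positive radial solution of (\ref{sab}) with parameters $a=0$, $\delta=1$, and the given $b$, $\mu$. The scaling $u(x)\mapsto\lambda^{(4+b)/(\mu-1)}u(\lambda x)$ that leaves the biharmonic equation invariant coincides with the scaling that leaves the system invariant under these parameters, so uniqueness ``up to a scaling'' transfers faithfully between the two formulations.

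Next I would verify that the hypotheses of Theorem \ref{global} are met. From (\ref{ga}) with $a=0$ and $\delta=1$ one gets $\gamma_{a,b}=(4+b)/(\mu-1)$ and $\xi_{a,b}=(2\mu+2+b)/(\mu-1)$; since $\mu>1$ we have $\gamma_{a,b}<\xi_{a,b}$, and the required inequality $\min\{\gamma_{a,b},\xi_{a,b}\}>N-2$ in (\ref{cde}) rewrites as $4+b>(N-2)(\mu-1)$, i.e.\ $\mu<(N+2+b)/(N-2)$, which is precisely the hypothesis. Moreover $\delta=1<N/(N-2)=(N+a)/(N-2)$ since $N>2$, so Theorem \ref{global} applies \emph{without} swapping the roles of $u$ and $v$. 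The theorem then produces a positive global radial solution, unique up to the scaling above, with $\lim_{r\to 0}r^{N-2}u=\alpha>0$ and $\lim_{r\to\infty}r^{\gamma_{a,b}}u=A_{N}$, and $u$ solves the biharmonic equation because $\Delta v=|x|^{b}u^{\mu}$ reads $\Delta^{2}u=|x|^{b}u^{\mu}$.

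Finally I would check the value of the constant at infinity. With $D=\mu-1$, $\delta=1$, $a=0$, the definition of $A_{N}$ in Theorem \ref{global} becomes
\[
A_{N}^{\mu-1}=\gamma_{a,b}(\gamma_{a,b}-N+2)\cdot\xi_{a,b}(\xi_{a,b}-N+2).
\]
Substituting $\gamma_{a,b}-(N-2)=(N+2+b-(N-2)\mu)/(\mu-1)$ and $\xi_{a,b}-(N-2)=(N+b-(N-4)\mu)/(\mu-1)$ and clearing denominators yields exactly the displayed formula for $C^{\mu-1}$, so $C=A_{N}$.

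The only nontrivial point is the equivalence used in the uniqueness claim: given a positive biharmonic solution $u$ with these asymptotics, one must know that $v:=\Delta u>0$ so that $(u,v)$ is a \emph{positive} pair of (\ref{sab}). This follows because $\Delta v=|x|^{b}u^{\mu}\geq 0$ makes $v$ subharmonic, and the sharp power asymptotics of $u$ at $0$ (of Green-function type) and at $\infty$ (the pure power of (\ref{zero})) force $v$ to have the corresponding nonnegative profiles at the ends, leaving no room for an interior sign change. With this reversibility in hand, the uniqueness part of Theorem \ref{global} is the uniqueness statement of the corollary. Aside from this small verification, the proof is essentially a translation of Theorem \ref{global}.
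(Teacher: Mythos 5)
Your reduction to Theorem \ref{global} with $a=0$, $\delta=1$ is exactly what the paper intends (it gives no separate proof of this corollary), and your verifications are correct: with $D=\mu-1$ one has $\gamma_{a,b}=(4+b)/(\mu-1)$, $\xi_{a,b}=\gamma_{a,b}+2$, the condition (\ref{cde}) reduces to $\mu<(N+2+b)/(N-2)$, the case $\delta=1<\frac{N+a}{N-2}$ applies without swapping, and $A_N^{\mu-1}$ expands to the stated value of $C^{\mu-1}$.

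The one step that does not hold up as written is your justification that any competitor $u$ in the uniqueness claim has $v:=\Delta u>0$. Subharmonicity of $v$ together with ``nonnegative profiles at the ends'' does \emph{not} preclude an interior sign change: a radial subharmonic function can perfectly well be positive near $0$ and near $\infty$ and negative in between (e.g.\ $r^{2}+r^{2-N}-K$ for $K$ large). The maximum principle for subharmonic functions controls maxima, not minima. The correct mechanism is the monotonicity of $r^{N-1}v_{r}$: since $(r^{N-1}v_{r})_{r}=r^{N-1+b}u^{\mu}>0$ and, by the decay $u\sim Cr^{-\gamma_{a,b}}$, this derivative is $O(r^{N-3-\xi_{a,b}})$ with $\xi_{a,b}>N-2$, the quantity $r^{N-1}v_{r}$ increases to a finite limit $m$ at infinity. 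If $m>0$ one finds $v$ tending to a limit $L$ with $v-L\sim-\tfrac{m}{N-2}r^{2-N}$, and each of the subcases $L>0$, $L<0$, $L=0$ contradicts $u>0$ with $u\to0$ faster than $r^{-(N-2)}$ after integrating $\Delta u=v$ twice. Hence $m\leq0$, so $v_{r}<0$ everywhere, $v$ is strictly decreasing, and its limit at infinity must be $\geq0$ (otherwise $\Delta u\leq L'<0$ near infinity forces $u\to-\infty$); therefore $v>0$ on all of $(0,\infty)$. With this repair your argument is complete; without it the uniqueness assertion is not fully justified, since Theorem \ref{global} gives uniqueness only within the class of \emph{positive pairs} $(u,v)$.
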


\section{Large solutions at the boundary\label{boun}}

This section is devoted to the study of the boundary blow up problem for
nonnegative radial solutions of (\ref{sab}). We begin by observing that system
(\ref{sab}) admits a scaling invariance: if $(u,v)$ is a solution, then for
any $\theta>0$,
\begin{equation}
r\mapsto(\theta^{\gamma_{a,b}}u(\theta r),\theta^{\xi_{a,b}}v(\theta r)),
\label{scaling}%
\end{equation}
where $\gamma_{a,b},\xi_{a,b}$ are defined in (\ref{ga}), is also a
solution.\medskip

\subsection{Existence and estimates of large solutions}

We say that a nonnegative solution $(u,v)$ of (\ref{sab}) defined in $(0,R)$
is \textit{regular} at $0$ if $u,v\in C^{2}\left(  0,R\right)  \cap C(\left[
0,R)\right)  $. Then $u,v\in C^{1}(\left[  0,R)\right)  $ when $a,b\geq-1$,
and moreover $u^{\prime}(0)=v^{\prime}(0)=0$ when $a,b>-1$, and $u,v\in
C^{2}(\left[  0,R)\right)  $ when $a,b\geq0$. \medskip

We first give an existence and uniqueness result for regular solutions:

\begin{proposition}
\label{ereg} Assume (\ref{abn}) and only that $D=\delta\mu-1\neq0$. Then for
any $u_{0},\ v_{0}\geq0,$ there exists a unique local regular solution $(u,v)
$ with initial data $(u_{0},v_{0})$.
\end{proposition}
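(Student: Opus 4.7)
The plan is to rewrite the radial form of (\ref{sab}) as a pair of Volterra integral equations and apply Banach's contraction principle on $C([0,r_0])^2$ for some small $r_0>0$.

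The radial system is $(r^{N-1}u')'=r^{N-1+a}v^{\delta}$ and $(r^{N-1}v')'=r^{N-1+b}u^{\mu}$. Because $a>-N$ makes the right-hand side integrable at $0$, the regularity hypothesis forces $r^{N-1}u'(r)\to 0$ as $r\to 0^+$, and similarly for $v$. Integrating twice and applying Fubini yields
\begin{equation*}
u(r)=u_0+\int_0^r K(r,s)\,s^{N-1+a}\,v(s)^{\delta}\,ds,\qquad v(r)=v_0+\int_0^r K(r,s)\,s^{N-1+b}\,u(s)^{\mu}\,ds,
\end{equation*}
where $K(r,s)=\int_s^r t^{1-N}\,dt$, which equals $(s^{2-N}-r^{2-N})/(N-2)$ for $N\geq 3$, $\log(r/s)$ for $N=2$, and $r-s$ for $N=1$. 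The conditions $a,b>\max\{-2,-N\}$ ensure that the kernels $K(r,s)s^{N-1+a}$ and $K(r,s)s^{N-1+b}$ are integrable in $s\in(0,r)$ for every $r\in[0,r_0]$, with integrals that vanish as $r\to 0^+$.

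Define $T(u,v)=(\tilde u,\tilde v)$ by the pair of right-hand sides, and let $B_{\varepsilon}=\{(u,v)\in C([0,r_0])^2:\|u-u_0\|_{\infty},\|v-v_0\|_{\infty}\leq\varepsilon\}$. The estimate $|\tilde u(r)-u_0|\leq C_N(v_0+\varepsilon)^{\delta}r_0^{2+a}$ and its analogue for $\tilde v$ give $T(B_{\varepsilon})\subset B_{\varepsilon}$ for $r_0$ small. For the contraction,
\begin{equation*}
|\tilde u_1(r)-\tilde u_2(r)|\leq C_N r_0^{2+a}\,L_{\delta}\,\|v_1-v_2\|_{\infty},
\end{equation*}
where $L_{\delta}$ is the Lipschitz constant of $s\mapsto s^{\delta}$ on the common range of the $v_i$. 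When either $\delta\geq 1$, or $v_0>0$ with $\varepsilon<v_0/2$ (so that one works on $[v_0/2,3v_0/2]$), this constant is finite, and the symmetric bound holds for $\tilde v$. Shrinking $r_0$ further makes $T$ a strict contraction; Banach's theorem then yields the unique fixed point, and differentiating the integral form (using $a,b>\max\{-2,-N\}$) shows it belongs to $C^2(0,r_0)\cap C([0,r_0])$ and solves the ODE classically.

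The main obstacle is uniqueness when $u_0=v_0=0$ together with $\min(\delta,\mu)<1$, since $s\mapsto s^{\delta}$ fails to be Lipschitz at the origin and the naive estimate breaks down. To handle this I would reformulate the fixed-point problem in a weighted Banach space of pairs with $r^{-\sigma}u,\,r^{-\tau}v\in C([0,r_0])$. A preliminary bootstrap on the integral form---starting from $u(r),v(r)=o(1)$ and iterating the affine recursion $(\sigma,\tau)\mapsto(2+a+\delta\tau,\,2+b+\mu\sigma)$---produces decay exponents that survive precisely because $D\neq 0$ keeps this iteration non-degenerate. With $(\sigma,\tau)$ taken sufficiently large, the rescaled nonlinearities become Lipschitz in the weighted norm, and the Banach fixed point argument goes through again to yield the required uniqueness.
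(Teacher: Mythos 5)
Your contraction argument for $u_{0},v_{0}>0$ is exactly the paper's first step (the same integral formulation, with the double integral written via Fubini as a single kernel), so that part is fine. The genuine gap is that your contraction works only when ($\delta\geq1$ or $v_{0}>0$) \emph{and} ($\mu\geq1$ or $u_{0}>0$), yet you identify the sole remaining obstacle as $u_{0}=v_{0}=0$. The case $u_{0}>0=v_{0}$ with $\delta<1$ (and its mirror image) falls through both parts of your argument: $v\mapsto v^{\delta}$ is not Lipschitz near $v=0$, and the weighted-space bootstrap you sketch presumes $u(r),v(r)=o(1)$, which is false here since $u\to u_{0}>0$. This is precisely the case the paper isolates as delicate: it gets existence from the Schauder fixed point theorem, uniqueness from monotonicity arguments as in \cite{gmls2}, and gives an alternative proof in Section \ref{orig} (Proposition \ref{ereg2}) by showing the unstable manifold of the fixed point $R_{0}$ of system (\ref{mn}) is one-dimensional. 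A repair in your framework would be to note that any regular solution with data $(u_{0},0)$ must satisfy $r^{-(2+b)}v(r)\to u_{0}^{\mu}/((N+b)(2+b))>0$ and to run the fixed point in the affine space where $r^{-(2+b)}v$ is continuous and bounded away from $0$, so that $v^{\delta}=r^{(2+b)\delta}\bigl(r^{-(2+b)}v\bigr)^{\delta}$ becomes Lipschitz in the weighted variable; but that step is absent.

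The closing paragraph on $u_{0}=v_{0}=0$ also does not hold up as sketched. The fixed point of your affine recursion is $\sigma=\bigl((2+a)+(2+b)\delta\bigr)/(1-\delta\mu)=-\gamma_{a,b}$, which is \emph{negative} when $D>0$; there are then no admissible positive decay exponents, and uniqueness in that case must instead come from showing that any regular solution with zero data vanishes identically (an Osgood/Gronwall-type argument, not a Banach fixed point in a weighted space). When $D<0$ the exponents are positive and a nontrivial regular pair $(Ar^{\sigma},Br^{\tau})$ vanishing at the origin genuinely exists (e.g.\ for $a=b=0$, exactly as for the scalar sublinear equation $\Delta U=U^{q}$, $q<1$), so uniqueness actually \emph{fails} at $(0,0)$ — which is why the paper's detailed version, Proposition \ref{ereg2}, explicitly excludes the data $(0,0)$.
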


The result follows from classical fixed point theorem when $u_{0},v_{0}>0$, by
writing the problem in an integral form:
\[
u(r)=u_{0}+\int_{0}^{r}\tau^{1-N}\int_{0}^{\tau}\theta^{N-1+a}v^{\delta
}(\theta)d\theta,\qquad v(r)=v_{0}+\int_{0}^{r}\tau^{1-N}\int_{0}^{\tau}%
\theta^{N-1+b}u^{\mu}(\theta)d\theta.
\]
In the case $u_{0}>0=v_{0}$, the existence can be obtained from the Schauder
fixed point theorem, and the uniqueness by using monotonicity arguments as in
\cite{gmls2}. We give an alternative proof in Section \ref{orig}, using the
dynamical system approach introduced in \cite{BGi}, which can be extended to
more general operators.\medskip

Next we show that all the nontrivial regular solutions blow up at some finite
$R>0$, and give the first upper estimates for any large solution. Our proofs
are a direct consequence of estimates (\ref{oe}).

\begin{proposition}
\label{ereg1}(i) Assume (\ref{dd}) and (\ref{abn}). For any regular
nonnegative solution $(u,v)\not \equiv (0,0),$ there exists $R$ such that $u$
and $v$ are unbounded near $R$. \medskip

(ii) Any solution $(u,v)$ which is nonnegative in an interval $(r_{0},R)$ and
unbounded at $R$, satisfies
\begin{equation}
\lim_{r\rightarrow R}u=\lim_{r\rightarrow R}v=\lim_{r\rightarrow R}u^{\prime
}=\lim_{r\rightarrow R}v^{\prime}=\infty. \label{cho}%
\end{equation}
and there exists $C=C(N,\delta,\mu)>0$ such that near $r=R$,
\begin{equation}
u(r)\leq C(R-r)^{-\gamma},\qquad v(r)\leq C(R-r)^{-\xi}. \label{gna}%
\end{equation}

\end{proposition}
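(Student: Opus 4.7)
The plan is to deduce both parts from the Keller--Osserman estimate (\ref{oe}) together with the monotonicity of radial solutions.

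For (i), the integral representations
\[
r^{N-1}u'(r)=\int_{0}^{r}s^{N-1+a}v^{\delta}\,ds,\qquad r^{N-1}v'(r)=\int_{0}^{r}s^{N-1+b}u^{\mu}\,ds
\]
show that $u,v$ are nondecreasing on the maximal interval $[0,R_{\max})$; the uniqueness part of Proposition \ref{ereg} rules out $(u_{0},v_{0})=(0,0)$ for a nontrivial solution, and then the same formulas give $u,v>0$ for $r>0$. Suppose for contradiction that $R_{\max}=\infty$. Then $(u,v)$ is a nonnegative subsolution of (\ref{sab}) in $\mathbb{R}^{N}\backslash\{0\}$, and because the constant in (\ref{oe}) is universal one may apply the estimate on each ball $B(0,2\rho)\backslash\{0\}$ and let $\rho\to\infty$ to obtain $u(x)\leq C|x|^{-\gamma_{a,b}}$ for all $x\neq 0$. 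Letting $|x|\to\infty$ forces $u\to 0$, which combined with monotonicity gives $u\equiv 0$, then $v\equiv 0$, contradicting nontriviality. Hence $R_{\max}<\infty$, so at least one of $u,v$ is unbounded at $R_{\max}$; the analysis of (ii) below then shows both blow up.

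For the limits in (\ref{cho}), since $r^{N-1}u'$ and $r^{N-1}v'$ are nondecreasing, $u$ and $v$ have limits in $[0,\infty]$ at $R$. If $u$ is the unbounded one, then $u\to\infty$, which substituted into the formula for $v'$ makes the integrand $s^{N-1+b}u^{\mu}$ arbitrarily large, so $v',v\to\infty$; the symmetric argument then gives $u'\to\infty$. For the quantitative bound (\ref{gna}) I would reduce to a one-dimensional Keller--Osserman estimate. Setting $s=R-r$, $U(s)=u(R-s)$, $V(s)=v(R-s)$, the radial system becomes
\[
U''(s)=(R-s)^{a}V^{\delta}+\frac{N-1}{R-s}U'(s),\qquad V''(s)=(R-s)^{b}U^{\mu}+\frac{N-1}{R-s}V'(s).
\]
By (\ref{cho}), $U',V'<0$ near $s=0$, so the drift terms are nonpositive; moreover $(R-s)^{a}\leq 2R^{a}$ and $(R-s)^{b}\leq 2R^{b}$ for small $s$. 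A multiplicative rescaling absorbing these constants turns $(U,V)$ into a nonnegative subsolution of the autonomous one-dimensional system with unit weights on some $(0,\varepsilon)$. Applying (\ref{oe}) in dimension $N=1$ with $a=b=0$ (its proof in \cite{BGr} being valid for every $N\geq 1$), whose exponents reduce to $\gamma_{0,0}=\gamma$ and $\xi_{0,0}=\xi$, yields $U(s)\leq Cs^{-\gamma}$ and $V(s)\leq Cs^{-\xi}$, i.e.\ (\ref{gna}).

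The main obstacle is the reduction in (ii) from the weighted $N$-dimensional problem to the unweighted one-dimensional one: one must first establish (\ref{cho}) so that the drift terms have the correct sign, and then verify that the translated, rescaled functions genuinely form a nonnegative subsolution on a neighborhood of $s=0$ so that (\ref{oe}) applies cleanly at the singular endpoint.
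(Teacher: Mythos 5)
Your part (i) and the limits (\ref{cho}) are essentially sound. In (i) you globalize the Keller--Osserman bound by applying (\ref{oe}) on larger and larger balls and invoking the scale invariance (\ref{scaling}), whereas the paper uses the Kelvin transform; both routes reach the same contradiction ($u\to 0$ at infinity versus $u$ nondecreasing and positive), so that variant is acceptable.

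The quantitative estimate (\ref{gna}) is where the argument breaks. After the translation $s=R-r$ you obtain
\[
U''(s)=(R-s)^{a}V^{\delta}(s)+\frac{N-1}{R-s}\,U'(s),
\]
and since $U'<0$ near $s=0$ the drift term is indeed nonpositive --- but that is the \emph{wrong} sign for what you need. The Keller--Osserman estimate applies to subsolutions, i.e. to pairs satisfying $-U''+cV^{\delta}\le 0$, equivalently $U''\ge cV^{\delta}$. Your identity only yields $U''\le (R-s)^{a}V^{\delta}$, with an uncontrolled negative correction $-\frac{N-1}{R-s}|U'|$; dropping a nonpositive term lowers $U''$ and so does not produce the required inequality. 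To make $(U,V)$ a genuine subsolution of the unweighted one-dimensional system you would have to show that $\frac{N-1}{R-s}|U'|$ is dominated by a small fraction of $(R-s)^{a}V^{\delta}$ near $s=0$, and a priori this is not known --- it is essentially part of what the proposition is proving (a posteriori $|u'|\sim d^{-\gamma-1}$ while $v^{\delta}\sim d^{-\gamma-2}$, so the domination is true only once the asymptotics are established). This is precisely why the paper does not translate but substitutes $r=\Psi(s)$ from (\ref{tra}): that change of variables annihilates the first-order term exactly, giving $u_{ss}=F(s)v^{\delta}$ with $F(s)=r^{2N-2+a}\to 1$, so that $-u_{ss}+\frac12 v^{\delta}\le 0$ holds identically near $s=0$ and (\ref{oe}) in dimension one applies. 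Without that transformation (or an independent bound on $|U'|/V^{\delta}$), your reduction does not go through.
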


\begin{proof}
(i) Let $(u,v)$ be any nontrivial regular solution. Suppose first that
$v_{0}>0$. Then from (\ref{sab}), $r^{N-1}u^{\prime}$ is positive for small
$r$, and nondecreasing, hence $u$ is increasing. If the solution is entire,
then it satisfies (\ref{oe}) near $\infty$: indeed by the Kelvin transform,
the functions
\[
\overline{u}(x)=\left\vert x\right\vert ^{2-N}u(x/\left\vert x\right\vert
^{2}),\qquad\overline{v}(x)=\left\vert x\right\vert ^{2-N}v(x/\left\vert
x\right\vert ^{2}),
\]
satisfy in $\ B(0,1)\backslash\left\{  0\right\}  $ the system
\[
\left\{
\begin{array}
[c]{c}%
-\Delta\overline{u}+\left\vert x\right\vert ^{\overline{a}}\overline
{v}^{\delta}=0,\\
-\Delta\overline{v}+\left\vert x\right\vert ^{\overline{b}}\overline{u}^{\mu
}=0,
\end{array}
\right.
\]
where $\overline{a}=(N-2)\delta-(N+2+a),\overline{b}=(N-2)\mu-(N+2+b)$, and
$\gamma_{a,b},\xi_{a,b}$ are replaced by $N-2-\gamma_{a,b},N-2-\xi_{a,b}$.
Then the estimate (\ref{oe}) for ($\overline{u},\overline{v})$ implies the one
for $(u,v)$ and thus $u$ tends to $0$ at $\infty$, which is contradictory.
Furthermore, from
\[
u\leq u_{0}+\frac{r^{2+a}}{(2+a)(N+a)}v^{\delta},\quad v\leq v_{0}%
+\frac{r^{2+b}}{(2+b)(N+b)}u^{\mu},
\]
$u$ and $v$ blow up at the same point $R>0$.\medskip

(ii) Since \textit{\ }$r^{N-1}u^{\prime}$ is increasing, it has a limit as
$r\rightarrow R$. If this limit is finite, then $u^{\prime}$ is bounded,
implying that $u$ has a finite limit; this contradicts our assumption. Thus
(\ref{cho}) holds. By (\ref{scaling}) we can assume $R=1$ and make the
transformation
\begin{equation}
r=\Psi(s)=\left\{
\begin{array}
[c]{ccc}%
(1+(N-2)s)^{-1/(N-2)}, &  & \text{if }N\neq2,\\
e^{-s}, &  & \text{if }N=2,
\end{array}
\right.  \label{tra}%
\end{equation}
(in particular $r=1-s$ if $N=1)$, so that $s$ describes an interval $\left(
0,s_{0}\right]  $, $s_{0}>0$, and we get the system
\begin{equation}
\left\{
\begin{array}
[c]{c}%
u_{ss}=F(s)v^{\delta}\\
v_{ss}=G(s)u^{\mu}%
\end{array}
\right.  \label{GH}%
\end{equation}
with
\begin{equation}
F(s)=r^{2N-2+a},\qquad G(s)=r^{2N-2+b}; \label{ffgg}%
\end{equation}
hence $\lim_{s\rightarrow0}F=\lim_{s\rightarrow0}G=1$. Then
\[
\left\{
\begin{array}
[c]{c}%
-u_{ss}+\frac{1}{2}v^{\delta}\leq0\\
-v_{ss}+\frac{1}{2}u^{\mu}\leq0
\end{array}
\right.
\]
in some interval $\left(  0,s_{1}\right]  $, thus from the Keller-Osserman
estimates (\ref{oe}), there exists $C=C(N,\delta,\mu)>0$ such that $u(s)\leq
Cs^{-\gamma},v(s)\leq Cs^{-\xi},$ near $s=0$ and (\ref{gna}) follows.
\end{proof}

\subsection{The precise behavior near the boundary}

In this section we prove Theorem \ref{main}.

\subsubsection{Scheme of the proof}

Consider a solution blowing up at $R=1$. In the case of dimension $N=1$, and
$a=b=0$, we have that $F\equiv G\equiv1$ in (\ref{ffgg}), and we are concerned
with the system
\begin{equation}
\left\{
\begin{array}
[c]{c}%
u_{ss}=v^{\delta}\\
v_{ss}=u^{\mu}.
\end{array}
\right.  \label{E}%
\end{equation}
Following the ideas of \cite{BGi}, we are led to make the substitution
\[
X(t)=-\frac{su_{s}}{u},\qquad Y(t)=-\frac{sv_{s}}{v},\qquad Z(t)=\frac
{sv^{\delta}}{u_{s}},\qquad W(t)=\frac{su^{\mu}}{v_{s}},
\]
where $t=\ln s$, $t$ describes $\left(  -\infty,t_{0}\right]  $, and we obtain
the autonomous system
\begin{equation}
\left\{
\begin{array}
[c]{rcl}%
X_{t} & = & X\left[  X+1+Z\right]  ,\\
Y_{t} & = & Y\left[  Y+1+W\right]  ,\\
Z_{t} & = & Z\left[  1-\delta Y-Z\right]  ,\\
W_{t} & = & W\left[  1-\mu X-W\right]  .
\end{array}
\right.  \label{M1}%
\end{equation}
We study the solutions in the region where $X,Y\geq0$ and $Z,W\leq0$. In this
region system (\ref{M1}) admits two fixed points
\begin{equation}
O=(0,0,0,0),\quad M_{0,1}=(\gamma,\xi,-1-\gamma,-1-\xi) \label{poi}%
\end{equation}
where $\gamma$ and $\xi$ are defined in (\ref{gks}). We intend to show that
trajectories associated to the large solutions converge to $M_{0,1}$. Observe
that system (\ref{E}) has a first integral, which is a \textit{crucial point}
in what follows:
\[
u_{s}v_{s}-\frac{u^{\mu+1}}{\mu+1}-\frac{v^{\delta+1}}{\delta+1}=C,
\]
equivalently
\[
e^{-2t}uv(XY+\frac{XZ}{\delta+1}+\frac{YW}{\mu+1})=C.
\]
Since any large solution at $r=1$ satisfies $\lim_{r\rightarrow1}%
u=\lim_{r\rightarrow1}v=\infty$, we obtain
\[
XY+\frac{XZ}{\delta+1}+\frac{YW}{\mu+1}=o(e^{2t})
\]
as $t\rightarrow-\infty$. Thus, eliminating $W$, we get the nonautonomous
system of order $3$
\begin{equation}
\left\{
\begin{array}
[c]{rcl}%
X_{t} & = & X\left[  X+1+Z\right]  ,\\
Y_{t} & = & Y\left[  Y+1\right]  -(\mu+1)X(Y+\frac{Z}{\delta+1})+o(e^{2t}),\\
Z_{t} & = & Z\left[  1-\delta Y-Z\right]  .
\end{array}
\right.  \label{A1p}%
\end{equation}
which appears as a perturbation of system
\begin{equation}
\left\{
\begin{array}
[c]{rcl}%
X_{t} & = & X\left[  X+1+Z\right]  ,\\
Y_{t} & = & Y\left[  Y+1\right]  -(\mu+1)X(Y+\frac{Z}{\delta+1}),\\
Z_{t} & = & Z\left[  1-\delta Y-Z\right]  .
\end{array}
\right.  \label{A1}%
\end{equation}
Moreover, by using a suitable change of variables, system (\ref{A1p})
\textit{\ reduces to a nonautonomous system of order 2}, and we can show that
the last system behaves like an autonomous one. Then we come back to the
initial system and deduce the convergence.\medskip

In the case $N\geq1$ or $a,\ b$ not necessarily equal to $0$, we first reduce
the problem to a system similar to (\ref{M1}), but nonautonomous, and we prove
that it is a perturbation of (\ref{M1}). Moreover we produce an identity that
plays the role of a first integral, allowing us to reduce to a double
perturbation of (\ref{A1}). We manage with the two perturbations in order to conclude.

\subsubsection{Steps of the proof}

Our proof relies strongly in a result due to Logemann and Ryan, see \cite{lr}.
We state it below for the convenience of the reader.

\begin{theorem}
\label{logr} \cite[Corollary 4.1]{lr} Let $h:\mathbb{R}_{+}\times
\mathbb{R}^{M}\rightarrow\mathbb{R}^{M}$ be of Carath\'{e}odory class. Assume
that there exists a locally Lipschitz continuous function $h^{\ast}%
:\mathbb{R}^{M}\rightarrow\mathbb{R}^{M}$ such that for all compact
$C\subset\mathbb{R}^{M}$ and all $\varepsilon>0$, there exists $T\geq0$ such
that
\[
\sup_{c\in C}ess\sup_{\tau\geq T}||h(\tau,x)-h^{\ast}(x)||<\varepsilon
\]
Assume that $x$ is a bounded solution of equation $x_{\tau}=h(\tau,x)$ on
$\mathbb{R}_{+}$ such that $x(0)=x_{0}.$ Then the $\omega$-limit set of $x$ is
non empty, compact and connected, and invariant under the flow generated by
$h^{\ast}$.\medskip
\end{theorem}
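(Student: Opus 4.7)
The plan is to establish in turn the four assertions about $\omega(x)$: non-emptiness, compactness, connectedness, and invariance under the flow of $h^{\ast}$. The first three are standard features of bounded orbits depending only on continuity and boundedness, while the fourth is the substantive content and is where the asymptotic autonomy hypothesis is used. I would organize the argument as two blocks — the topological properties first, then the invariance via a shift-and-compactness argument.

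I would begin from the characterization $\omega(x) = \bigcap_{T \geq 0} \overline{\{x(\tau) : \tau \geq T\}}$. Since $x$ is bounded on $\mathbb{R}_{+}$, every tail lies in a fixed compact $K \subset \mathbb{R}^{M}$, so $\omega(x)$ is a nested intersection of non-empty compact sets, hence compact and non-empty. For connectedness I would argue by contradiction: if $\omega(x) = A \sqcup B$ with $\mathrm{dist}(A,B) > 0$, I would enclose $A$ and $B$ in disjoint open sets $U_A, U_B$; since $x(\tau)$ enters both $U_A$ and $U_B$ at arbitrarily large times and is continuous, it must pass through the compact corridor $K \setminus (U_A \cup U_B)$ infinitely often, producing a limit point there and contradicting $\omega(x) \subset U_A \cup U_B$.

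For the essential step I would fix $p \in \omega(x)$ and a sequence $t_n \to \infty$ with $x(t_n) \to p$, and consider the time-shifted trajectories $y_n(\tau) = x(t_n + \tau)$, which satisfy the integral equation
\[
y_n(\tau) = x(t_n) + \int_{0}^{\tau} h(t_n + s, y_n(s))\,ds.
\]
On any compact interval $[0,S]$ the values $y_n(s)$ remain in $K$, so decomposing the integrand as $h(t_n+s,y_n(s)) = h^{\ast}(y_n(s)) + \bigl(h(t_n+s,y_n(s)) - h^{\ast}(y_n(s))\bigr)$ and invoking the asymptotic hypothesis with the compact $K$, the integrand is uniformly bounded in $n$. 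Hence $\{y_n\}$ is equi-Lipschitz on $[0,S]$, so by Arzel\`a--Ascoli a subsequence converges uniformly on compacts to a continuous curve $y : [0,\infty) \to K$ with $y(0) = p$. Local Lipschitz continuity of $h^{\ast}$ and uniform convergence $y_n \to y$ give $\int_{0}^{\tau} h^{\ast}(y_n) \to \int_{0}^{\tau} h^{\ast}(y)$, while the correction term tends to zero uniformly on $[0,S]$ by the asymptotic hypothesis. Passing to the limit in the integral equation yields $y' = h^{\ast}(y)$ with $y(0) = p$, and uniqueness for $h^{\ast}$ identifies $y$ with the $h^{\ast}$-orbit through $p$. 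Since $y(\tau) = \lim_{n} y_n(\tau) \in \omega(x)$ for every $\tau \geq 0$, forward invariance follows; shifting the initial time backward by any fixed amount, admissible once $t_n$ is large, extends the orbit to negative times.

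The principal obstacle is controlling the limit passage in the integral equation: the convergence of $h(t,\cdot)$ to $h^{\ast}(\cdot)$ is only uniform on spatial compacts and only essentially uniform in time, and it must be reconciled with the equicontinuity of the $y_n$'s obtained from a priori bounds on the integrand, so that the non-autonomous integrals are replaced in the limit by the autonomous integral of $h^{\ast}$. The locally Lipschitz hypothesis on $h^{\ast}$ is indispensable, both for this identification and for the uniqueness that pins the limit curve down as a genuine orbit of the autonomous flow generated by $h^{\ast}$.
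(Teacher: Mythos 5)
This statement is quoted verbatim from Logemann--Ryan \cite[Corollary 4.1]{lr}; the paper states it ``for the convenience of the reader'' and gives no proof, so there is no in-paper argument to compare yours against. Your proof is nonetheless correct and follows the classical route for asymptotically autonomous systems (going back to Markus): the non-emptiness, compactness and connectedness of $\omega(x)$ use only boundedness and continuity of the trajectory, and the invariance is obtained by a shift-and-compactness argument, passing to the limit in the integral equation for the translates $y_n(\tau)=x(t_n+\tau)$ and using the local Lipschitz continuity of $h^{\ast}$ to identify the limit curve with the unique $h^{\ast}$-orbit through $p$. The one step you should not gloss over is the substitution of the moving point $y_n(s)$ into the hypothesis: the bound $\mathrm{ess\,sup}_{\tau\geq T}\|h(\tau,c)-h^{\ast}(c)\|<\varepsilon$ holds for each fixed $c$ with an exceptional null set that a priori depends on $c$, so to conclude $\|h(t_n+s,y_n(s))-h^{\ast}(y_n(s))\|\leq\varepsilon$ for a.e.\ $s$ you must first upgrade the hypothesis to a single null set valid simultaneously for all $c$ in the compact $K$; this is done by taking a countable dense subset of $K$ and using the continuity of $h(\tau,\cdot)$ for a.e.\ $\tau$ --- i.e.\ precisely the Carath\'eodory assumption, which otherwise plays no role in your write-up. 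With that point supplied, the equi-Lipschitz bound, the Arzel\`a--Ascoli extraction, the limit passage in the integral equation, and the backward extension by shifting the initial time (with a diagonal argument over expanding intervals) all go through as you describe and yield the full $h^{\ast}$-orbit through each $p\in\omega(x)$ inside $\omega(x)$.
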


The proof of Theorem \ref{main} requires some important lemmas. By scaling we
still assume that\textit{\ }$R=1$.

\begin{lemma}
\label{change}Let $(u,v)$ be any fixed solution of system (\ref{sab}) in
$\left[  r_{0},1\right)  $, unbounded at\textit{\ }$1$\textit{. Let us set
}$t=\log s$, where $s=\Psi^{-1}(r)$ is defined in (\ref{tra}). Let $F,G$ be
defined by (\ref{ffgg}). Then the functions
\begin{equation}
X(t)=-\frac{su_{s}}{u}>0,\ Y(t)=-\frac{sv_{s}}{v}>0,\ Z(t)=\frac
{sF(s)v^{\delta}}{u_{s}}<0,\ W(t)=\frac{sG(s)u^{\mu}}{v_{s}}<0, \label{dei}%
\end{equation}
satisfy the (in general nonautonomous) system%
\begin{equation}
\left\{
\begin{array}
[c]{rcl}%
X_{t} & = & X\left[  X+1+Z\right]  ,\\
Y_{t} & = & Y\left[  Y+1+W\right]  ,\\
Z_{t} & = & Z\left[  1-\delta Y-Z-\alpha(t)\right]  ,\\
W_{t} & = & W\left[  1-\mu X-W-\beta(t)\right]  ,
\end{array}
\right.  \label{NA1}%
\end{equation}
where
\begin{equation}
\alpha(t)=\frac{2N-2+a}{1+(N-2)e^{t}}e^{t},\qquad\beta(t)=\frac{2N-2+b}%
{1+(N-2)e^{t}}e^{t}. \label{bab}%
\end{equation}
Moreover we recover $u,v$ by the relations
\begin{equation}
u=s^{-\gamma}F^{-\frac{1}{D}}G^{-\frac{\delta}{D}}(|Z|X)^{\frac{1}{D}%
}(|W|Y)^{\frac{\delta}{D}},\ \qquad v=s^{-\xi}F^{-\frac{\mu}{D}}G^{-\frac
{1}{D}}(|W|Y)^{\frac{1}{D}}(|Z|X)^{\frac{\mu}{D}}. \label{for}%
\end{equation}

\end{lemma}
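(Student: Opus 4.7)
The lemma is a direct computation, and the plan splits it into three pieces: (a) rewrite the radial system in the $s$-variable so that the Laplacian becomes a plain second derivative; (b) differentiate $X, Y, Z, W$ with respect to $t = \log s$ to read off (\ref{NA1}); (c) invert the defining relations (\ref{dei}) to recover $u$ and $v$.

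For (a), the choice of $\Psi$ in (\ref{tra}) is dictated by the single identity $dr/ds = -r^{N-1}$, valid uniformly for $N = 1, 2, \geq 3$. Combined with $\Delta u = r^{1-N}(r^{N-1} u_r)_r$, this yields $u_{ss} = r^{2(N-1)} \Delta u = r^{2N-2+a} v^\delta = F(s) v^\delta$, and symmetrically for $v$; this is (\ref{GH})--(\ref{ffgg}), already noted in the proof of Proposition \ref{ereg1}. Moreover, Proposition \ref{ereg1}(ii) gives $u_r, v_r \to +\infty$ as $r \to 1^-$, so that $u_s, v_s < 0$ near $s = 0^+$, which together with $F, G > 0$ and $u, v > 0$ forces $X, Y > 0$ and $Z, W < 0$ on the relevant interval, as asserted in (\ref{dei}).

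For (b), note that $d/dt = s\, d/ds$. Differentiating $X = -s u_s/u$ gives $X_t = X + X^2 - s^2 u_{ss}/u$, and substituting $s^2 F v^\delta / u = s Z u_s / u = -XZ$ collapses this to $X_t = X(X + 1 + Z)$; the $Y$-equation is identical. For $Z_t$ the only non-autonomous contribution comes from $F'(s)$: from $F(s) = r^{2N-2+a}$ and $dr/ds = -r^{N-1}$ one obtains
\[
\frac{s F'(s)}{F(s)} = -(2N-2+a)\, s\, r^{N-2},
\]
and inserting the explicit form of $\Psi$ gives $s r^{N-2} = e^t / (1+(N-2) e^t)$, with the obvious collapses in the cases $N = 1, 2$, so that $sF'/F = -\alpha(t)$ with $\alpha$ as in (\ref{bab}). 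The two remaining terms produced by differentiating $Z = s F v^\delta / u_s$ contribute $-\delta Y Z$ (from $v^\delta$) and $-Z^2$ (from $u_s$ in the denominator, using $u_{ss} = F v^\delta$), giving $Z_t = Z[1 - \delta Y - Z - \alpha(t)]$. The $W$-equation follows by swapping $(a, \delta, F)$ with $(b, \mu, G)$.

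For (c), substituting $u_s = -X u / s$ into the definition of $Z$ yields $v^\delta = |Z| X u / (s^2 F)$, and symmetrically $u^\mu = |W| Y v / (s^2 G)$. Eliminating $v$ between these two identities and using $D = \mu \delta - 1$ gives
\[
u^D = \frac{(|Z|X)(|W|Y)^\delta}{s^{2(1+\delta)}\, F\, G^\delta},
\]
whence the expression for $u$ in (\ref{for}) since $\gamma = 2(1+\delta)/D$; the formula for $v$ is symmetric. The entire proof is algebraic bookkeeping and presents no real obstacle; the only care required is tracking the signs of $Z, W$ and verifying that the three forms of $\Psi$ collapse to the same $\alpha, \beta$.
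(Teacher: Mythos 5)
Your computation is correct and follows exactly the route of the paper's proof, which simply performs the substitution $r=\Psi(s)$ to get $u_{ss}=F(s)v^{\delta}$, $v_{ss}=G(s)u^{\mu}$, identifies $\alpha=-sF'/F$, $\beta=-sG'/G$, and obtains (\ref{for}) by "straight computation." You have merely filled in the algebra (the identity $dr/ds=-r^{N-1}$, the sign discussion via (\ref{cho}), and the elimination yielding $u^{D}$) that the paper leaves implicit; no discrepancy.
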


\begin{proof}
\textit{ }Since $(u,v)$ is unbounded, (\ref{cho}) holds. We make the
substitution (\ref{tra}), which leads to system (\ref{GH}), with $F,G$ given
by (\ref{ffgg}). Clearly we can assume that $u_{s}<0$ and $v_{s}<0$ on
$\left(  0,s_{0}\right]  $, $\lim_{s\rightarrow0}\left\vert u_{s}\right\vert
=\lim_{s\rightarrow0}\left\vert v_{s}\right\vert =\lim_{s\rightarrow0}%
u=\lim_{s\rightarrow0}v=\infty$. Then we can define $X,Y,Z,W$ by (\ref{dei})
and we obtain system (\ref{NA1}) with
\[
\alpha(t)=-s\frac{F^{\prime}(s)}{F(s)},\qquad\beta(t)=-s\frac{G^{\prime}%
(s)}{G(s)};
\]
then (\ref{bab}) follows, and we deduce (\ref{for}) by straight
computation.\medskip
\end{proof}

Next we prove that system (\ref{NA1}) is a perturbation of the corresponding
autonomous system (\ref{M1}):

\begin{lemma}
\label{bounded-sol} Let $N\geq1$. Under the assumptions of Theorem \ref{main},
there exist $k>0$ and $\bar{t}<t_{0}$ such that%
\begin{equation}
1/k\leq X,Y,|Z|,|W|\leq k\quad\quad\text{for}\quad t\leq\bar{t}. \label{bb}%
\end{equation}
Moreover, setting
\begin{equation}
XY+\frac{XZ}{\delta+1}+\frac{YW}{\mu+1}=\frac{\varpi(t)}{\mu+1}, \label{hon}%
\end{equation}
we have $\varpi(t)=O(e^{t})$ as $t\rightarrow-\infty.$
\end{lemma}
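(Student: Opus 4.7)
Plan: The approach is to combine the Keller-Osserman upper bound of Proposition~\ref{ereg1} with an almost-first-integral adapted from the scalar energy; the upper estimates on $X,Y,|Z|,|W|$ then follow directly, the bound on $\varpi$ comes from an exact identity, and the matching lower bounds will be the hard part.

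First, I would translate (\ref{gna}) into the $s$-variable (using $R-r\asymp s$ as $s\to 0^+$) to obtain $u(s)\leq C_0 s^{-\gamma}$ and $v(s)\leq C_0 s^{-\xi}$. Since $F,G$ extend smoothly to $s=0$ with $F(0)=G(0)=1$ and $\xi\delta=\gamma+2$, the equation $u_{ss}=Fv^{\delta}$ gives $u_{ss}\leq C_1 s^{-\gamma-2}$; integrating from $s$ to $s_0$ and using $|u_s|\to\infty$ from (\ref{cho}) yields $|u_s|\leq C_2 s^{-\gamma-1}$, and symmetrically $|v_s|\leq C_2 s^{-\xi-1}$. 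Next I would introduce the energy-type quantity
\[
\Phi(s):=u_s v_s-\frac{F(s)v^{\delta+1}}{\delta+1}-\frac{G(s)u^{\mu+1}}{\mu+1},
\]
for which a short computation from (\ref{GH}) gives $\Phi'(s)=-F'(s)v^{\delta+1}/(\delta+1)-G'(s)u^{\mu+1}/(\mu+1)$. Since $F',G'$ are bounded near $0$ and $v^{\delta+1},u^{\mu+1}=O(s^{-\sigma})$ with $\sigma=\gamma+\xi+2>1$, integration gives $|\Phi(s)|\leq C_3 s^{-(\gamma+\xi+1)}$. A direct calculation from the definitions (\ref{dei}) produces the key identity
\[
XY+\frac{XZ}{\delta+1}+\frac{YW}{\mu+1}=\frac{s^{2}\Phi(s)}{u(s)\,v(s)},
\]
which, by the definition (\ref{hon}) of $\varpi$, means $\varpi(t)=(\mu+1)\,s^{2}\Phi/(uv)$.

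The principal obstacle will be the matching lower bounds $u(s)\geq c\,s^{-\gamma}$ and $v(s)\geq c\,s^{-\xi}$, required both to turn the previous ``ratio'' estimates into genuine upper bounds on $X,Y,|Z|,|W|$ and to convert the identity above into $\varpi=O(e^t)$. I plan to obtain these by a blow-up and contradiction argument: if $s_n^{\gamma}u(s_n)\to 0$ along some sequence $s_n\to 0^+$, form the rescalings $\hat u_n(\sigma):=s_n^{\gamma}u(s_n\sigma)$ and $\hat v_n(\sigma):=s_n^{\xi}v(s_n\sigma)$. Using $\xi\delta=\gamma+2$ and $\gamma\mu=\xi+2$, this pair satisfies a vanishing perturbation of the autonomous system $u''=v^{\delta}$, $v''=u^{\mu}$ on $(0,s_0/s_n)$, is uniformly dominated by $C\sigma^{-\gamma}$, $C\sigma^{-\xi}$, blows up as $\sigma\to 0$ (since $u,v\to\infty$ at $s=0$), yet has $\hat u_n(1)\to 0$. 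A subsequential limit $(\hat u_\infty,\hat v_\infty)$ would be a nonnegative solution of the autonomous system on $(0,\infty)$ blowing up at $0$ and vanishing at $\sigma=1$; this is ruled out either by convexity combined with the strict positivity propagated from the blow-up, or by the one-dimensional classification of large solutions obtained in \cite{gmls2}. Once the two-sided bounds $u\asymp s^{-\gamma}$, $v\asymp s^{-\xi}$ are secured, the same integration argument applied to $u_{ss}\geq c's^{-\xi\delta}$ gives $|u_s|\geq c''s^{-\gamma-1}$ and similarly for $|v_s|$, completing (\ref{bb}); combined with $uv\asymp s^{-(\gamma+\xi)}$, the identity above then delivers $|\varpi(t)|\leq C\,s=C\,e^{t}$ as asserted.
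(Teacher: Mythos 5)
Your reductions are sound where they are explicit: the identity $XY+\frac{XZ}{\delta+1}+\frac{YW}{\mu+1}=s^{2}\Phi(s)/(uv)$ is correct (it is precisely the paper's quantity $H_{\sigma,\theta}$ with $\sigma=\theta=0$, up to the factor $r^{2-N}$), the computation $\Phi'=-F'v^{\delta+1}/(\delta+1)-G'u^{\mu+1}/(\mu+1)$ is right, and given two-sided bounds $u\asymp s^{-\gamma}$, $v\asymp s^{-\xi}$ your integrations do yield (\ref{bb}) and $\varpi=O(e^{t})$. So the entire proposal reduces, as you say, to the matching lower bounds.

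That is exactly where the argument does not close. In your blow-up step, the assertion that the rescaled limit ``blows up as $\sigma\to0$ since $u,v\to\infty$ at $s=0$'' is a non sequitur: for fixed $\sigma$ you have $\hat u_n(\sigma)=s_n^{\gamma}u(s_n\sigma)$ with $u(s_n\sigma)\to\infty$ but $s_n^{\gamma}\to0$, and nothing prevents the limit $(\hat u_\infty,\hat v_\infty)$ from being identically zero. This degenerate limit actually occurs for slowly degenerating profiles (e.g.\ $u\sim s^{-\gamma}|\log s|^{-1}$ satisfies $s_n^{\gamma}u(s_n\sigma)\to0$ for every $\sigma$), and it produces no contradiction: a trivial limit is a perfectly good nonnegative solution of the autonomous system, and the classification of \cite{gmls2} says nothing against it. Moreover, even a nontrivial limit vanishing at $\sigma=1$ together with all derivatives is forced (by the superlinearity $\mu\delta>1$ and iteration of the integral equations) to vanish identically, so the only scenario your compactness argument ever excludes is one that cannot arise. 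In short, the blow-up method cannot detect logarithmic loss, which is precisely the enemy here, and the ``lack of Harnack inequality / comparison principle'' for the system prevents you from importing a sub-solution barrier to restore positivity. The paper closes this step by completely different means: a generalized L'H\^opital argument giving $\delta\,\overline{\lim}\,Y\geq1+\overline{\lim}|Z|$ and $\mu\,\overline{\lim}\,X\geq1+\overline{\lim}|W|$ (so the upper limits of $X,Y$ are bounded below), followed by an analysis at a sequence of local minima of $X$ in which the almost-first-integral (your $\Phi$, in the sharper form of the monotone quantities $H_{\sigma,\theta}$) forces $Y(t_n)\to0$, contradicting the equation-derived lower bound $1\leq\delta Y(t_n)+\alpha(t_n)$ at those minima. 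Some argument of this dynamical type, exploiting the ODE structure of $X,Y,Z,W$ rather than compactness of rescalings, is needed; as written, your proof of the lower bounds, and hence of the lemma, is incomplete.
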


\begin{proof}
We establish some integral inequalities, playing the role of a first integral,
then we use them to prove (\ref{bb}), and finally we deduce the behavior of
$\varpi$. \medskip

\noindent(i) \textit{Integral inequalities. } Let $\sigma$,\ $\theta
\in\mathbb{R}$ and set
\begin{align*}
H_{\sigma,\theta}(s)  &  =r^{2-N}\left(  u_{s}v_{s}-F(s)\frac{v^{\delta+1}%
}{\delta+1}-G(s)\frac{u^{\mu+1}}{\mu+1}-\frac{\sigma vu_{s}+\theta uv_{s}%
}{1+(N-2)s}\right) \\
&  =r^{2-N}uve^{-2t}\left(  XY+\frac{X(Z+\bar{\alpha}(t))}{\delta+1}%
+\frac{Y(W+\bar{\beta}(t))}{\mu+1}\right)  ,
\end{align*}
where
\[
\bar{\alpha}(t)=\frac{\sigma(\delta+1)s}{1+(N-2)s}\quad\text{and}\quad
\bar{\beta}(t)=\frac{\theta(\mu+1)s}{1+(N-2)s}.
\]
It can be easily verified that
\begin{equation}
H_{\sigma,\theta}^{\prime}(s)=(N-2-\sigma-\theta)u_{s}v_{s}+F(s)\frac
{v^{\delta+1}}{\delta+1}(N+a-\sigma(\delta+1))+G(s)\frac{u^{\mu+1}}{\mu
+1}(N+b-\theta(\mu+1)).
\end{equation}
By choosing first the constants $\sigma=\sigma_{1}>0$ and $\theta=\theta
_{1}>0$ large enough, we obtain that $H_{\sigma_{1},\theta_{1}}^{\prime}(s)<0$
and thus $H_{\sigma_{1},\theta_{1}}(s)\geq-C_{1}$ for some t $C_{1}>0$; next
choosing $\sigma=\sigma_{2}<0$ and $\theta=\theta_{2}<0$ and large enough in
absolute value, we obtain that $H_{\sigma_{2},\theta_{2}}^{\prime}(s)>0$ and
thus $H_{\sigma_{2},\theta_{2}}(s)\leq C_{2}$ for some $C_{2}>0$. Hence, there
exists functions $\bar{\alpha}_{i}(t),\bar{\beta}_{i}(t),i=1,2,$ which are
$O(e^{t})$ as $t\rightarrow-\infty$, and such that
\begin{align}
XY+\frac{X(Z+\bar{\alpha}_{1}(t))}{\delta+1}+\frac{Y(W+\bar{\beta}_{1}%
(t))}{\mu+1}  &  \geq-C_{1}r^{N-2}\frac{e^{2t}}{uv}\label{hin}\\
XY+\frac{X(Z+\bar{\alpha}_{2}(t))}{\delta+1}+\frac{Y(W+\bar{\beta}_{2}%
(t))}{\mu+1}  &  \leq C_{2}r^{N-2}\frac{e^{2t}}{uv}. \label{hun}%
\end{align}
(ii) \textit{Estimates from below in (\ref{bb}).} Using that $u_{ss}\leq
v^{\delta}$ and multiplying by $2u_{s}<0$ we obtain%
\[
(u_{s}^{2})_{s}\geq2v^{\delta}u_{s}=(2v^{\delta}u)_{s}-2\delta v^{\delta
-1}uv_{s}>(2v^{\delta}u)_{s}%
\]
since $v_{s}<0$ in $\left(  0,s_{0}\right]  $, hence $u_{s}^{2}-2v^{\delta
}u\leq C=(u_{s}^{2}-2v^{\delta}u)(s_{0});$ since $\lim_{s\rightarrow
0}v^{\delta}u=\infty$, it follows that $u_{s}^{2}\leq(5/2)v^{\delta}u$ on
$\left(  0,s_{1}\right]  $ , for sufficiently small $s_{1}$. Using the same
method for the second equation, we obtain from (\ref{dei}) that
\begin{equation}
X(t)\leq3\left\vert Z(t)\right\vert ,\qquad Y(t)\leq3\left\vert
W(t)\right\vert ,\qquad\text{on }\left(  -\infty,t_{1}\right]  . \label{aut}%
\end{equation}
Also, from the generalized L'H\^{o}pital's rule,
\[
\overline{\lim_{s\rightarrow0}}\frac{|Z|}{F}=\overline{\lim_{s\rightarrow0}%
}\frac{sv^{\delta}}{-u_{s}}\leq\overline{\lim_{s\rightarrow0}}\frac{\delta
sv^{\delta-1}v_{s}+v^{\delta}}{-u_{ss}}=\overline{\lim_{s\rightarrow0}}\left(
\frac{\delta Y-1}{F}\right)  ,
\]
and by symmetry
\begin{equation}
\delta\overline{\lim_{s\rightarrow0}}Y\geq1+\overline{\lim_{s\rightarrow0}%
}|Z|,\qquad\mu\overline{\lim_{s\rightarrow0}}X\geq1+\overline{\lim
_{s\rightarrow0}}|W|. \label{m23}%
\end{equation}
Suppose now that $\underline{\lim}_{t\rightarrow-\infty}X=0$. From
(\ref{m23}), $\overline{\lim}_{t\rightarrow-\infty}X\geq1/\mu$, hence there is
a sequence $\{t_{n}\}\rightarrow-\infty$ of local minima of $X$ such that
$\lim_{n\rightarrow\infty}X(t_{n})=0$, and from the definition of $X$ in
(\ref{dei}), $X(t_{n})>0$ for all $n$ sufficiently large. At each $t_{n}$ we
have that $X_{t}(t_{n})=0$ and $X_{tt}(t_{n})\geq0$. From (\ref{NA1}), using
that $X(t_{n})\not =0$, we have that $X(t_{n})+1=|Z(t_{n})|$ and hence
$|Z(t_{n})|>1$. Since $X_{tt}(t_{n})=X(t_{n})Z_{t}(t_{n})$, it follows that
$Z_{t}(t_{n})\geq0$, and thus, from the third equation in (\ref{NA1}),
$1-\delta Y(t_{n})+|Z(t_{n})|\leq0$, implying
\begin{equation}
1\leq1+|Z(t_{n})|\leq\delta Y(t_{n})+\alpha(t_{n}). \label{Yb}%
\end{equation}
From (\ref{hin}) and (\ref{aut}), we deduce
\[
Y^{2}\leq3Y(\bar{\beta}_{1}(t)+(\mu+1)X)+3(\mu+1)\frac{X\bar{\alpha}_{1}%
(t)}{\delta+1}+O(e^{2t}),
\]
hence $\lim_{n\rightarrow\infty}Y(t_{n})=0$, which contradicts (\ref{Yb}). We
conclude that $\underline{\lim}_{t\rightarrow-\infty}X>0$, and similarly for
$Y$, thus $X,Y,|Z|,|W|$ are bounded from below. \medskip

\noindent(iii) \textit{Estimates from above.} From (\ref{gna}), $s^{\gamma}u$
and $s^{\xi}v$ are bounded as $s\rightarrow0,$ thus from (\ref{for}) and
(\ref{aut}), $X^{2}Y^{2\delta}$ is bounded as $t\rightarrow\infty$. Since
$X,Y$ are bounded from below, they are bounded from above, and then also $|Z|$
and $|W|$, from (\ref{m23}), hence (\ref{bb}) holds. \medskip

\noindent(iv) \textit{Conclusion.} From (\ref{hin}), (\ref{hun}), since $X,Y$
are bounded and $|\bar{\alpha}_{i}(t)|,\ |\bar{\beta}_{i}(t)|\leq Ce^{t}$,
\begin{equation}
XY\geq\frac{X|Z|}{\delta+1}+\frac{Y|W|}{\mu+1}-C_{3}e^{t}\text{ \quad
and\quad\ }XY\leq\frac{X|Z|}{\delta+1}+\frac{Y|W|}{\mu+1}+C_{4}e^{t},
\label{han}%
\end{equation}
for some $C_{3},C_{4}>0$. Then we deduce (\ref{hon}).\medskip
\end{proof}

Next we show that a convenient combination of our solution $(X,Y,Z,W)$
satisfies a system of order 2. We have

\begin{lemma}
\label{reduction} Under the assumptions of Theorem \ref{main}, and with the
above notations, let
\begin{equation}
x(\tau)=-\frac{X(t)}{Z(t)},\quad y=-\frac{Y(t)}{Z(t)},\quad\tau=-\int
_{t}^{\bar{t}}Z(\sigma)d\sigma. \label{the}%
\end{equation}
Then $(x,y)$ lies in the region
\[
\mathcal{R}_{0}:=\{(x,y)\ |\ 1/k^{2}\leq x\leq k^{2},\quad\frac{1}{\delta
+1}+\frac{1}{2(\mu+1)k^{4}}\leq y\leq k^{2}\}
\]
for $\tau\geq\tilde{\tau}>0$, and satisfies
\begin{equation}
\left\{
\begin{array}
[c]{rcl}%
x_{\tau} & = & x(-x-\delta y+2)+\varpi_{1}(\tau)\\
y_{\tau} & = & (\frac{1}{\delta+1}-y)((\delta+1)y-(\mu+1)x)+\varpi_{2}(\tau),
\end{array}
\right.  \label{NA2}%
\end{equation}
where $\varpi_{1}(\tau)=O(e^{-K\tau})$ and $\varpi_{2}(\tau)=O(e^{-K\tau})$
for some $K>0$, as $\tau\rightarrow\infty$.
\end{lemma}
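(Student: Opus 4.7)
The strategy is to compute $x_{\tau}$ and $y_{\tau}$ directly via the chain rule $\frac{d}{d\tau}=\frac{1}{Z}\frac{d}{dt}$, using system (\ref{NA1}) for the derivatives of $X,Y,Z$, and exploiting the integral identity (\ref{hon}) from Lemma \ref{bounded-sol} as a surrogate for a first integral in order to eliminate $W$. The key insight is that, although (\ref{NA1}) is an order--$4$ non-autonomous system, (\ref{hon}) encodes an exact first integral of the autonomous limit (\ref{M1}) up to an $O(e^{t})$ error; after division by $Z^{2}$ this becomes an $O(e^{t})$ constraint between $x$, $y$ and $w:=-W/Z$, collapsing the effective dynamics onto a planar system.

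For the $x$-equation, differentiating $x=-X/Z$ gives $x_{\tau}=X Z_{t}/Z^{3}-X_{t}/Z^{2}$; substituting the first and third equations of (\ref{NA1}) and rewriting with $X/Z=-x$, $Y/Z=-y$ produces, after the two $1/Z$ contributions cancel,
\begin{equation*}
x_{\tau}=x(-x-\delta y+2)+x\,\frac{\alpha(t)}{Z},
\end{equation*}
so one may take $\varpi_{1}(\tau)=x\alpha(t)/Z$, which is $O(e^{t})$ by (\ref{bab}) and the bounds $|Z|\geq 1/k$, $x\leq k^{2}$. For the $y$-equation, the second equation of (\ref{NA1}) gives $Y_{t}=Y^{2}+Y+YW$, and (\ref{hon}) supplies
\begin{equation*}
YW=\varpi(t)-(\mu+1)XY-\frac{\mu+1}{\delta+1}XZ;
\end{equation*}
dividing by $Z^{2}$, expressing everything in $(x,y)$, and combining with the contribution $YZ_{t}/Z^{3}$, the two singular $y/Z$ terms cancel and the polynomial remainder should reassemble into
\begin{equation*}
y_{\tau}=\Bigl(\tfrac{1}{\delta+1}-y\Bigr)\bigl((\delta+1)y-(\mu+1)x\bigr)+y\,\frac{\alpha(t)}{Z}-\frac{\varpi(t)}{Z^{2}},
\end{equation*}
so that $\varpi_{2}(\tau)=y\alpha(t)/Z-\varpi(t)/Z^{2}$, again $O(e^{t})$ by Lemma \ref{bounded-sol}.

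To upgrade $O(e^{t})$ to $O(e^{-K\tau})$, I use $|Z|\geq 1/k$ on $(-\infty,\bar{t}\,]$: then $d\tau/dt=Z\leq -1/k$, so integrating from $t$ to $\bar{t}$ gives $\tau(t)\geq(\bar{t}-t)/k$, and hence $e^{t}\leq e^{\bar{t}}e^{-\tau/k}$; taking $K=1/k$ suffices. The coarse inclusion $1/k^{2}\leq x,y\leq k^{2}$ is immediate from the bounds (\ref{bb}). For the sharper lower bound on $y$, I would divide the first inequality in (\ref{han}) by $Z^{2}$, getting
\begin{equation*}
x\Bigl(y-\tfrac{1}{\delta+1}\Bigr)\geq\frac{yw}{\mu+1}-O(e^{t}),
\end{equation*}
and insert $y,w\geq 1/k^{2}$ together with $x\leq k^{2}$: for $\tau$ sufficiently large the $O(e^{t})$ term is absorbed and $y-1/(\delta+1)$ is forced above a fixed positive multiple of $1/((\mu+1)k^{4})$.

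The main obstacle is the bookkeeping in the $y_{\tau}$ computation: several $1/Z$ contributions must cancel exactly, and the inhomogeneity coming from the integral identity must reorganize into the clean factored form appearing on the right-hand side of (\ref{NA2}). A second, subtler point is to ensure strict separation $y>\tfrac{1}{\delta+1}$, since the limiting planar system degenerates along the line $y=\tfrac{1}{\delta+1}$; this sharper inclusion in $\mathcal{R}_{0}$ is precisely what will allow the Logemann--Ryan framework (Theorem \ref{logr}) to be invoked in the next step to identify the $\omega$-limit set of $(x,y)$ with fixed points of the autonomous limit of (\ref{NA2}).
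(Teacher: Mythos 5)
Your proposal follows essentially the same route as the paper: eliminate $W$ via (\ref{hon}), pass to $x=-X/Z$, $y=-Y/Z$ with the time change $\tau$, and read off the perturbed planar system (\ref{NA2}); the algebra for $x_{\tau}$ and $y_{\tau}$ and the use of (\ref{han})/(\ref{hon}) for the lower bound on $y-\frac{1}{\delta+1}$ all check out. One small correction: to convert $O(e^{t})$ into $O(e^{-K\tau})$ you need the upper bound $|Z|\leq k$ from (\ref{bb}) (giving $\tau\leq k(\bar{t}-t)$, hence $e^{t}\leq e^{\bar{t}}e^{-\tau/k}$), not the lower bound $|Z|\geq 1/k$, which only yields $\tau\geq(\bar{t}-t)/k$ and thus the reverse inequality $e^{t}\geq e^{\bar{t}}e^{-k\tau}$; since both bounds are already established, this is immediately repaired.
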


\begin{proof}
We first reduce system (\ref{NA1}) to a system of order 3: from relation
(\ref{hon}) we eliminate $W$ in the system (\ref{NA1}) and obtain
\[
\left\{
\begin{array}
[c]{rcl}%
X_{t} & = & X\left[  X+1+Z\right]  ,\\
Y_{t} & = & Y\left[  Y+1\right]  -(\mu+1)X(Y+\frac{Z}{\delta+1})+\varpi(t),\\
Z_{t} & = & Z\left[  1-\delta Y-Z-\alpha(t)\right]  ,
\end{array}
\right.
\]
which is a perturbation of system (\ref{A1}). Next, defining $x=-\frac{X}%
{Z},\quad y=-\frac{Y}{Z}$, we get the system
\[
\left\{
\begin{array}
[c]{rcl}%
x_{t} & = & Z\left[  x(2-x-\delta y)+\varpi_{1}\right] \\
y_{t} & = & Z\left[  (\frac{1}{\delta+1}-y)((\delta+1)y-(\mu+1)x)+\varpi
_{2}\right]
\end{array}
\right.
\]
with
\begin{equation}
\varpi_{1}=-\frac{\alpha(t)X}{Z^{2}}=O(e^{t}),\qquad\varpi_{2}=\frac
{\varpi(t)-\alpha(t)Y}{Z^{2}}=O(e^{t}), \label{mega}%
\end{equation}
from Lemma \ref{bounded-sol}, and then
\begin{equation}
Z_{t}=Z(1+Z(\delta y-1)+\alpha(t)). \label{zed}%
\end{equation}
and $\tau(t)$ defined by (\ref{the}) for $t\leq\bar{t}$ describes $\left[
0,\infty\right)  $ as $t$ describes $\left(  -\infty,\bar{t}\right]  $, and
$\tau/2k\leq\left\vert t\right\vert \leq2k\tau$ for $t\leq\bar{t}$. Hence we
deduce (\ref{NA2}), and the estimates of $\varpi_{1},\varpi_{2}$. Notice that
$1/k^{2}\leq x,y\leq k^{2}$ for any $\tau\geq0$ from (\ref{bb}), and from
(\ref{hon}), for $\tau\geq\tilde{\tau}>0,$%
\[
y-\frac{1}{\delta+1}=\frac{1}{XZ}(\frac{YW}{\mu+1}+o(1))\geq\frac{1}%
{2(\mu+1)k^{4}},
\]
ending the proof.\medskip
\end{proof}

Hence system (\ref{NA2}) appears as an exponential perturbation of an
autonomous system that we study now:

\begin{lemma}
Consider the system
\begin{equation}
\left\{
\begin{array}
[c]{rcl}%
x_{\tau} & = & x(2-x-\delta y)\\
y_{\tau} & = & (y-\frac{1}{\delta+1})((\mu+1)x-(\delta+1)y).
\end{array}
\right.  \label{T0}%
\end{equation}
The fixed points of system (\ref{T0}) are $O=(0,0)$, and
\[
j_{0}=\left(  0,\frac{1}{\delta+1}\right)  ,\quad\ell_{0}=\left(  \frac
{\delta+2}{\delta+1},\frac{1}{\delta+1}\right)  ,\quad m_{0}=(x_{0}%
,y_{0})=\left(  \frac{2(\delta+1)}{\mu\delta+2\delta+1},\frac{2(\mu+1)}%
{\mu\delta+2\delta+1}\right)  ,
\]
and $m_{0}$ is a sink. Any solution of the system (\ref{T0}) which stays in
the region $\mathcal{R}_{0}$ converges to the fixed point $m_{0}$ as
$\tau\rightarrow\infty$.
\end{lemma}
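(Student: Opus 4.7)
The argument splits into three pieces: identifying the fixed points, checking that $m_0$ is hyperbolically attracting, and proving global convergence inside the compact box $\mathcal{R}_0$.

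For the fixed points, setting $x_\tau=0$ forces either $x=0$ or $x+\delta y=2$, while $y_\tau=0$ forces either $y=\tfrac{1}{\delta+1}$ or $(\mu+1)x=(\delta+1)y$; pairing the four alternatives yields exactly $O,j_0,\ell_0,m_0$. To see that $m_0=(x_0,y_0)$ is a sink I would use the equilibrium relations $2-x_0-\delta y_0=0$ and $(\mu+1)x_0=(\delta+1)y_0$ to simplify the Jacobian to
\[
J(m_0)=\begin{pmatrix}-x_0 & -\delta x_0\\ (\mu+1)c & -(\delta+1)c\end{pmatrix},\qquad c:=y_0-\tfrac{1}{\delta+1}=\frac{\mu\delta+2\mu+1}{(\delta+1)(\mu\delta+2\delta+1)}>0,
\]
whose trace $-x_0-(\delta+1)c$ is negative and whose determinant $x_0c(\mu\delta+2\delta+1)$ is positive, so both eigenvalues have strictly negative real part.

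For the global convergence, my plan is to translate the line $y=\tfrac{1}{\delta+1}$ onto an axis by setting $w=y-\tfrac{1}{\delta+1}$, which is strictly positive on $\mathcal{R}_0$. In the $(x,w)$ variables the system takes the Lotka--Volterra form
\[
x_\tau=x\bigl(\tfrac{\delta+2}{\delta+1}-x-\delta w\bigr),\qquad w_\tau=w\bigl((\mu+1)x-1-(\delta+1)w\bigr),
\]
whose unique positive equilibrium is the translate $(x_0,w_0)$ of $m_0$. I would then introduce the weighted relative-entropy Lyapunov function
\[
V(x,w)=(\mu+1)\Bigl(x-x_0-x_0\ln\tfrac{x}{x_0}\Bigr)+\delta\Bigl(w-w_0-w_0\ln\tfrac{w}{w_0}\Bigr),
\]
which is smooth, strictly convex on $\{x>0,w>0\}$, and attains its unique minimum $0$ at $(x_0,w_0)$. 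Using the equilibrium identities to rewrite $\tfrac{\delta+2}{\delta+1}-x-\delta w=-(x-x_0)-\delta(w-w_0)$ and $(\mu+1)x-1-(\delta+1)w=(\mu+1)(x-x_0)-(\delta+1)(w-w_0)$, a direct computation shows that the weight pair $(\mu+1,\delta)$ is precisely what cancels the cross term, yielding
\[
V_\tau=-(\mu+1)(x-x_0)^2-\delta(\delta+1)(w-w_0)^2,
\]
which is strictly negative off $m_0$. Since the orbit stays in the compact rectangle $\mathcal{R}_0$ and $V$ strictly decreases except at $m_0$, LaSalle's invariance principle forces the $\omega$-limit set to be contained in $\{V_\tau=0\}\cap\mathcal{R}_0=\{m_0\}$, which is the claimed convergence.

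The main obstacle is not the computation but spotting the right ansatz: the translation $w=y-\tfrac{1}{\delta+1}$ puts the system into genuine Lotka--Volterra form, and the weight ratio $\delta:(\mu+1)$ (rather than $1:1$) is exactly what is needed to cancel the interaction term and reduce $V_\tau$ to a negative sum of squares. Once this observation is in place the rest is straightforward algebra plus a standard invocation of LaSalle; in particular one does not need to rule out closed orbits by hand via a Dulac function.
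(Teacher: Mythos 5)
Your proposal is correct, and for the global-convergence step it takes a genuinely different route from the paper. The paper also finds the fixed points by the same case analysis and verifies that $m_{0}$ is a sink via the characteristic equation of the linearization (your trace/determinant computation is equivalent: your trace $-x_{0}-(\delta+1)c$ and determinant $x_{0}c(\mu\delta+2\delta+1)$ reproduce exactly the coefficients $\frac{\delta\mu+3+2\mu+2\delta}{\mu\delta+2\delta+1}$ and $\frac{2(\mu\delta+2\mu+1)}{\mu\delta+2\delta+1}$ of the paper's quadratic). Where you diverge is the last step: the paper rules out limit cycles in $(0,\infty)\times(1/(\delta+1),\infty)$ by the Bendixson--Dulac criterion with the multiplier $\mathcal{B}=x^{p}(y-\frac{1}{\delta+1})^{-q}$ for suitable $p,q$, and then invokes Poincar\'e--Bendixson together with the fact that $m_{0}$ is the unique fixed point in $\mathcal{R}_{0}$. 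You instead translate by $w=y-\frac{1}{\delta+1}$ to expose the Lotka--Volterra structure and use the classical Goh-type entropy Lyapunov function with weights $(\mu+1,\delta)$; your algebra checks out, the cross term does cancel with that weight ratio, $V_{\tau}=-(\mu+1)(x-x_{0})^{2}-\delta(\delta+1)(w-w_{0})^{2}$ is correct, and LaSalle on the compact set $\mathcal{R}_{0}$ (which lies in $\{w>0\}$ by its definition) finishes the argument. Your approach buys a cleaner conclusion --- it avoids the Poincar\'e--Bendixson dichotomy (in particular the need to exclude $\omega$-limit sets made of fixed points joined by connecting orbits) and would survive in higher dimensions --- at the cost of having to spot the right change of variables and weights; the paper's Dulac multiplier is, in effect, the same structural observation in a different guise. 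One cosmetic point: the convergence claim implicitly requires $m_{0}\in\mathcal{R}_{0}$ (otherwise no orbit can remain in $\mathcal{R}_{0}$ and the statement is vacuous); this holds since $k$ in the definition of $\mathcal{R}_{0}$ may be taken large, but it is worth a sentence.
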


\begin{proof}
The point $m_{0}$ is a sink: the eigenvalues of the linearized system of
(\ref{T0}) at $m_{0}$ are the roots $\ell_{1},\ell_{2}$ of equation
\[
\ell^{2}+\frac{\delta\mu+3+2\mu+2\delta}{\mu\delta+2\delta+1}\ell+2\frac
{\mu\delta+2\mu+1}{\mu\delta+2\delta+1}=0,
\]
equivalently%
\begin{equation}
(\gamma+1)\ell^{2}+(\gamma+\xi+1)\ell+2(\xi+1)=0, \label{eqla}%
\end{equation}
and they have negative real part. Next we show that (\ref{T0}) has no limit
cycle in $\left(  0,\infty\right)  \times\left(  1/(\delta+1),\infty\right)
$. Let $\mathcal{B}=x^{p}(y-\frac{1}{\delta+1})^{-q}$, where $p,q$ are
parameters. Writing (\ref{T0}) under the form $x_{\tau}=\mathcal{F}(x,y)$,
$y_{t}=\mathcal{G}(x,y)$, we obtain
\[
\nabla\cdot(\mathcal{B}(\mathcal{F},\mathcal{G}))=\mathcal{B}_{x}x_{\tau
}+\mathcal{B}_{\eta}y_{\tau}+\mathcal{B}(\mathcal{F}_{x}+\mathcal{G}%
_{y}):=M\mathcal{B},\text{ \qquad where}%
\]%
\[
M=(\mu-1-p-q(\mu+1))x-\left(  p\delta-q(\delta+1)+3\delta+2\right)
(y-\frac{1}{\delta+1})+\frac{p(\delta+2)+q(\delta+1)+1}{\delta+1}.
\]
Choosing $q=\frac{\mu\delta+2\delta+2}{\mu\delta+2\delta+1}$ and
$p=\mu-1-q(\mu+1)$, we find that
\[
(\delta+1)M=-(\frac{\mu\delta+2\mu+1}{\mu\delta+2\delta+1}+\delta+2)<0.
\]
Hence, by the Bendixson-Dulac Theorem, system (\ref{T0}) has no limit cycle.
From the Poincar\'{e}-Bendixon Theorem, the $\omega$-limit set $\Gamma$ of any
solution of (\ref{T0}) lying in $\mathcal{R}_{0}$ is fixed point, of a union
of fixed points and connecting orbits. But $m_{0}$ is the unique fixed point
in $\mathcal{R}_{0}.$ Then any solution in $\mathcal{R}_{0}$ converges to
$m_{0}$ as $\tau\rightarrow\infty$.
\end{proof}

\begin{remark}
\label{val} It is easy to prove that there exists a connecting orbit joining
the two points $\ell_{0}$ and $m_{0},$ but it is not located in $\mathcal{R}%
_{0}$.\medskip
\end{remark}

We can now conclude.\medskip

\begin{proof}
[Proof of Theorem \ref{main}](i) \textit{Convergence for system (\ref{NA2}).}
From Proposition \ref{logr}, the $\omega$-limit set $\Sigma$ of our solution
$(x,y)$ of (\ref{NA2}) is nonempty, compact, connected and contained in
${\mathcal{R}}_{0}$, and $\Sigma=\bigcup_{\ell\in\Sigma,\tau\geq\tilde{\tau}%
}\varphi(\tau,\ell),$ where $\varphi(\tau,\ell)$ denotes the trajectory of
(\ref{T0}) such that $\varphi(\tilde{\tau},\ell)=\ell$. Since $\lim
_{\tau\rightarrow\infty}\varphi(\tau,\ell)=m_{0}$, there holds $m_{0}\in
\Sigma$. Since $m_{0}$ is a sink of (\ref{T0}), then from the standard
stability theory, see for example \cite[Theorem 3.1, page 327]{cod}, $(x,y)$
converges to $m_{0}$.\medskip

\noindent(ii) \textit{Convergence for system (\ref{NA1})}. By setting
$g(t)=1/Z$, we find from (\ref{zed}) that $g^{\prime}+(1-\alpha)g=1-\delta y,$
hence by L'H\^{o}pital's rule,
\[
\lim_{t\rightarrow-\infty}Z=\lim_{t\rightarrow-\infty}\frac{(e^{\int_{\bar{t}%
}^{t}(1-\alpha)})^{\prime}}{(ge^{\int_{\bar{t}}^{t}(1-\alpha)})^{\prime}}%
=\lim_{t\rightarrow-\infty}\frac{1-\alpha}{1-\delta y}=-\frac{\mu
\delta+2\delta+1}{\mu\delta-1}=-(1+\gamma)=Z_{0}.
\]
Hence
\[
\lim_{t\rightarrow-\infty}X=-\lim_{t\rightarrow-\infty}xZ=\frac{2(\delta
+1)}{\mu\delta-1}=\gamma=X_{0},\quad\quad\lim_{t\rightarrow-\infty}%
Y=\lim_{t\rightarrow-\infty}yZ=\frac{2(\mu+1)}{\mu\delta-1}=\xi=Y_{0}.
\]
Finally, from (\ref{hon}), we obtain $\lim_{t\rightarrow-\infty}%
W=-(1+\xi)=W_{0}$. That means $(X,Y,Z,W)$ converges to $M_{0,1}$ defined at
(\ref{poi}). Then from (\ref{for}) we deduce the estimates
\[
u(r)=A_{1}d^{-\gamma}(1+o(1)),\qquad v(r)=B_{1}d^{-\xi}(1+o(1))\text{ }%
\]
where $A_{1},B_{1}$ are given by and (\ref{A1B1}).\medskip

\noindent(iii) \textit{Expansion of }$u$\textit{ and }$v.$ We first consider
system (\ref{NA2}). Setting $x=x_{0}+\tilde{x},y_{0}+\tilde{y},$ we find a
system of the form%
\[
\left(  \tilde{x}_{\tau},\tilde{y}_{\tau}\right)  =\mathcal{A}(\tilde
{x},\tilde{y})+\mathcal{Q}(\tilde{x},\tilde{y})+\left(  \varpi_{1},\varpi
_{2}\right)
\]
where $(\tilde{x},\tilde{y})\rightarrow\left(  0,0\right)  ,$ the eigenvalues
$\ell_{1},\ell_{2}$ of $\mathcal{A}$ satisfy $\max$($\operatorname{Re}%
(\ell_{1},\ell_{2}))=-m<-1/(\gamma+1),$ and $\mathcal{Q}$ is quadratic and
$\varpi_{1}(\tau),\varpi_{2}(\tau)=O(e^{-K\tau}).$ There exists an euclidian
structure with a scalar product where $\langle\mathcal{A}(\tilde{x},\tilde
{y}),(\tilde{x},\tilde{y})\rangle$ $\leq-m\left\Vert (\tilde{x},\tilde
{y})\right\Vert ^{2}$ . Then the function $\tau\mapsto\eta(\tau)=\left\Vert
(\tilde{x},\tilde{y})\right\Vert \left(  \tau\right)  $ satisfies an
inequality of the type $\eta_{\tau}\leq-(m-\varepsilon)\eta+Ce^{-K\tau}$ for
any $\varepsilon>0$ and $\tau$ large enough. Then
\begin{equation}
\eta(\tau)=O(e^{-K\tau})+O(e^{-(m-\varepsilon)\tau}). \label{hot}%
\end{equation}
Then the convergence of $(x,y)$ to $(x_{0},y_{0})$ is exponential. From
(\ref{zed}), the convergence of $Z$ to $Z_{0}$ is exponential. Writing $\tau$
under the form
\[
\tau=\overline{c}+Z_{0}t+\int_{t}^{\infty}(Z_{0}-Z),
\]
we deduce that $\tau=\overline{c}+Z_{0}t+O(e^{kt})$ for some $k>0.$ From
(\ref{mega}) we obtain that $\varpi_{1},\varpi_{2}=$ $O(e^{-K_{0}\tau})$ with
$K_{0}=1/\left\vert Z_{0}\right\vert ;$ taking $K=K_{0}=1/(\gamma+1)$ in
(\ref{hot}), we find that $\eta(\tau)=O(e^{-K_{0}\tau})=O(e^{t}),$ because
$m>K_{0}.$ Then from (\ref{zed}) we deduce that $\left\vert Z-Z_{0}\right\vert
=O(e^{t}),$ and then from (\ref{the}), $\left\vert X-X_{0}\right\vert
+\left\vert Y-Y_{0}\right\vert =O(e^{t}),$ and in turn $\left\vert
W-W_{0}\right\vert =O(e^{t})$ from (\ref{hon}). Finally we come back to $u$
and $v$ by means of (\ref{for}): recalling that $s=e^{t}$ and $r=1+O(s)$ as
$s\rightarrow0,$ we deduce that
\[
u(r)=A_{1}s^{-\gamma}(1+O(s)),\qquad v(r)=B_{1}s^{-\xi}(1+O(s))\text{ }%
\]
and the expansion (\ref{esti}) follows from (\ref{tra}).\medskip
\end{proof}

\begin{proof}
[Proof of Corollary \ref{bila}]Let $u$ be a radial solution of (\ref{bih}).
Then $u$ and $v=\Delta u$ satisfy
\[
\left\{
\begin{array}
[c]{l}%
\Delta u=v\\
\Delta v=|x|^{b}\left\vert u\right\vert ^{\mu}%
\end{array}
\right.
\]
and then $u(r)>0$ in $(r_{0},R)$ and $u(R)=\infty$. Integrating twice the
second equation in this system, we have that $\lim\limits_{r\rightarrow
R}v(r)=\infty$ and Theorem \ref{main} applies.
\end{proof}

\subsection{The set of initial data for blow up}

Here we suppose $a=b=0$. By scaling, for any $\rho>0$ there exists solutions
which blow up at $\rho$. Let us call $\rho(u_{0},v_{0})$ the blow-up radius of
a regular solution with initial data $(u_{0},v_{0})$. From (\ref{scaling}), we
find
\[
\rho(\lambda^{\gamma}u_{0},\lambda^{\xi}v_{0})=\lambda^{-1}\rho(u_{0},v_{0}).
\]
Then for any $(u_{0},v_{0})\in S^{1}$ there is a unique $\lambda$ such that
$\rho(\lambda^{\gamma}u_{0},\lambda^{\xi}v_{0})=1.$ Thus there exist
infinitely many solutions blowing up at $R=1$, including in particular two
unique solutions with respective initial data $(\bar{u}_{0},0)$ and
$(0,\bar{v}_{0})$. Using monotonicity properties, it was shown in \cite{gmls2}
that the set
\[
\mathcal{S}=\left\{  (u_{0},v_{0})\in\left[  0,\infty\right)  \times\left[
0,\infty\right)  :\text{ }\lim_{r\rightarrow1}u=\lim_{r\rightarrow1}%
v=\infty\right\}
\]
is contained in $\left[  0,\bar{u}_{0}\right]  \times\left[  0,\bar{v}%
_{0}\right]  $. Next we give some properties of $\mathcal{S}$ extending some
results of \cite{gmls2} to higher dimensions.

\begin{proposition}
\label{set}Let $N\geq1$. If $\min\left\{  \delta,\mu\right\}  \geq1$, then
$\mathcal{S}$ is a \textit{simple curve joining the two points }$(\bar{u}%
_{0},0)$\textit{\ and }$(0,\bar{v}_{0})$.
\end{proposition}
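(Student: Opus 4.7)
The plan is to parametrise $\mathcal{S}$ by the angular direction of the initial data, using the scaling identity for the bijection with a parameter interval, and then to upgrade this to a continuous injection by means of the monotone comparison principle that the hypothesis $\min\{\delta,\mu\}\ge 1$ makes available.

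For each $\alpha\in[0,\pi/2]$ I introduce the scaling curve
\[
\Gamma_\alpha=\{(\lambda^{\gamma}\cos\alpha,\lambda^{\xi}\sin\alpha):\lambda\ge 0\}\subset\mathbb{R}_{\ge 0}^{2}.
\]
Since $\gamma,\xi>0$, each $\Gamma_\alpha$ is a smooth simple arc running from $0$ to infinity, and the family $\{\Gamma_\alpha\}_{\alpha\in[0,\pi/2]}$ partitions $\mathbb{R}_{\ge 0}^{2}\setminus\{0\}$ (for any nonzero $(u_0,v_0)$ the system $u_0=\lambda^{\gamma}\cos\alpha$, $v_0=\lambda^{\xi}\sin\alpha$ has a unique solution $(\lambda,\alpha)$). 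By the scaling identity $\rho(\lambda^{\gamma}u_0,\lambda^{\xi}v_0)=\lambda^{-1}\rho(u_0,v_0)$ the map $\lambda\mapsto\rho(\lambda^{\gamma}\cos\alpha,\lambda^{\xi}\sin\alpha)$ is a continuous strictly decreasing bijection of $(0,\infty)$ onto itself, so $\Gamma_\alpha\cap\mathcal{S}$ reduces to a unique point $\psi(\alpha)$; this defines a bijection $\psi:[0,\pi/2]\to\mathcal{S}$, with injectivity coming for free from disjointness of the $\Gamma_\alpha$'s.

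The central ingredient I would exploit is that under $\min\{\delta,\mu\}\ge 1$ the nonlinearities $u\mapsto u^{\mu}$ and $v\mapsto v^{\delta}$ are locally Lipschitz on $[0,\infty)$, and since (\ref{sab}) is cooperative (each right-hand side is nondecreasing in the other unknown), a Kamke-type comparison argument applied to the integral formulation of Proposition \ref{ereg} gives: if $(u_0,v_0)\le(u_0',v_0')$ componentwise with strict inequality at some component, then $u\le u'$ and $v\le v'$ on the common existence interval, with strict inequality for $r>0$. By continuation this yields $\rho(u_0,v_0)\ge\rho(u_0',v_0')$, and strictly so when the initial data differ. Combining this monotonicity with the scaling identity produces continuity of $\rho$ on $\mathbb{R}_{\ge 0}^{2}\setminus\{0\}$ through a sandwich argument: at an interior point $(u_0,v_0)$, $u_0,v_0>0$, for any $\lambda_{-}<1<\lambda_{+}$ close to $1$ every $(u_0',v_0')$ sufficiently close to $(u_0,v_0)$ satisfies
\[
(\lambda_{-}^{\gamma}u_0,\lambda_{-}^{\xi}v_0)\le(u_0',v_0')\le(\lambda_{+}^{\gamma}u_0,\lambda_{+}^{\xi}v_0),
\]
so monotonicity plus scaling yield $\lambda_{+}^{-1}\rho(u_0,v_0)\le\rho(u_0',v_0')\le\lambda_{-}^{-1}\rho(u_0,v_0)$; letting $\lambda_{\pm}\to 1$ concludes. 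At an axis point $(u_0,0)$ the sandwich is one-sided: the bound $\rho(u_0',v_0')\le\rho(u_0-\varepsilon,0)=(\bar u_0/(u_0-\varepsilon))^{1/\gamma}$ follows from the explicit scaling on the axis, while $\rho(u_0',v_0')\ge\rho(u_0+\varepsilon,\varepsilon)$ combined with interior continuity at $(u_0+\varepsilon,\varepsilon)$ controls the other direction as $\varepsilon\to 0$; the point $(0,v_0)$ is symmetric.

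With $\rho$ continuous on $\mathbb{R}_{\ge 0}^{2}\setminus\{0\}$, the map $\psi(\alpha)=(\rho(\cos\alpha,\sin\alpha)^{\gamma}\cos\alpha,\rho(\cos\alpha,\sin\alpha)^{\xi}\sin\alpha)$ is continuous on $[0,\pi/2]$, hence together with injectivity $\mathcal{S}=\psi([0,\pi/2])$ is a simple arc; the endpoints $\psi(0)=(\rho(1,0)^{\gamma},0)=(\bar u_0,0)$ and $\psi(\pi/2)=(0,\rho(0,1)^{\xi})=(0,\bar v_0)$ follow directly from the definitions of $\bar u_0$ and $\bar v_0$. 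The hard part is establishing the cooperative comparison principle in the form needed: the automatic lower semicontinuity of $\rho$ coming from continuation of ODE solutions is not enough to close the sandwich, and it is precisely at this step that $\min\{\delta,\mu\}\ge 1$ enters, since for $\delta<1$ or $\mu<1$ the nonlinearities are merely H\"older at the origin, which jeopardises both the Kamke-type comparison and the strict monotonicity of $\rho$ needed to read the axis limits.
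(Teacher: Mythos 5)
Your plan is essentially correct, but the mechanism you use for the delicate half of the argument --- upper semicontinuity of $\rho$ --- is genuinely different from the paper's. The paper splits continuity of $\rho$ into lower semicontinuity (from the contraction fixed point and continuous dependence, which is exactly where $\min\{\delta,\mu\}\geq1$ enters) and upper semicontinuity, and it obtains the latter by a compactness argument: starting the shooting at some $r_{0}>0$, performing the change of variables (\ref{tra}) and invoking the Keller--Osserman estimates (\ref{oe}) with a constant uniform over the approximating sequence, so that a sequence of solutions blowing up too late would converge to a solution bounded past its supposed blow-up radius. You instead get upper (and, in the interior, two-sided) semicontinuity from a monotone comparison principle for the radial initial value problem combined with the scaling sandwich. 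Both routes work; yours buys a more elementary argument that avoids the nonradial a priori estimates entirely, while the paper's avoids any appeal to comparison. Two cautions. First, the paper stresses that the system has \emph{no} comparison principle as a boundary value problem, so you must justify your Kamke-type step strictly at the level of the integral formulation in Proposition \ref{ereg} (the double-integral operator is order-preserving, so monotone Picard iteration together with uniqueness gives $u\leq u'$, $v\leq v'$ for ordered data, hence $\rho(u_{0},v_{0})\geq\rho(u_{0}',v_{0}')$); invoked in that form it is legitimate and is precisely the monotonicity used in \cite{gmls2}, but it should not be presented as an elliptic comparison principle. Second, your treatment of the axis points is slightly muddled: the missing ingredient there is exactly the lower semicontinuity $\liminf_{\varepsilon\to0}\rho(u_{0}+\varepsilon,\varepsilon)\geq\rho(u_{0},0)$, and this \emph{does} come from local continuous dependence on the data via the strict contraction (valid at $v_{0}=0$ precisely because $\delta\geq1$ makes $v\mapsto v^{\delta}$ Lipschitz at the origin), i.e.\ from the same step (i) as in the paper; your closing remark suggesting that this is ``not enough'' undersells what you already have. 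Strict monotonicity of $\rho$ is not needed anywhere, since injectivity of the parametrisation follows from the disjointness of the scaling orbits.
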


\begin{proof}
We claim that the mapping $(u_{0},v_{0})\in$ $\left[  0,\infty\right)
\times\left[  0,\infty\right)  \backslash\left\{  0,0\right\}  \longmapsto
\rho(u_{0},v_{0})$ is continuous. As in \cite{DLS} this will follow from our
global estimates.\medskip

(i) \textit{The function }$\rho$\textit{ is lower semi-continuous.} Indeed the
local existence is obtained by the fixed point theorem of a strict
contraction, since $\min\{\delta,\mu\}\geq1$, then we have local continuous
dependence of the initial conditions, even if $u_{0}=0$ or $v_{0}=0$, and the
result follows classically.\medskip

(ii) \textit{The function }$\rho$\textit{ is upper semi-continuous.} We can
start from a point $r_{0}>0$ instead of $0$. We prove that for any positive
$(\tilde{u}_{0},\tilde{v}_{0})$, considering any solution $(\tilde{u}%
,\tilde{v})$ equal to $(\tilde{u}_{0},\tilde{v}_{0})$ at $r_{0}$, with blow-up
point $\tilde{\rho}$, for any $\tilde{r}>\tilde{\rho}$, any solution $(u,v)$
starting from $r_{0}$ with data sufficiently close to $(\tilde{u}_{0}%
,\tilde{v}_{0})$, blows up before $\tilde{r}:$ suppose that it is false, then
there exists a sequence of positive solutions $(u_{n},v_{n})$, with data
$(\tilde{u}_{n},\tilde{v}_{n})$ at $r_{0}$, tending to $(\tilde{u}_{0}%
,\tilde{v}_{0})$, increasing, and blowing up at $\rho_{n}\geq$ $\tilde{r}$. We
can assume $\tilde{r}=1$. Making the change of variables (\ref{tra}) we get
solutions of system (\ref{GH}) in $\left(  0,s_{0}\right]  $, satisfying
$C_{0}=C_{0}(r_{0},N,a,b)$
\[
\left\{
\begin{array}
[c]{c}%
-u_{ss}+C_{0}v^{\delta}\leq0\\
-v_{ss}+C_{0}u^{\mu}\leq0
\end{array}
\right.
\]
with $u$ and $v$ decreasing. In fact estimates (\ref{oe}) hold with a
universal constant, in any $B(0,k)\backslash\left\{  0\right\}  $
$\subset\Omega$ such that the mean values of $u$ and $v$ on $\partial B(0,r)$
are strictly monotone. Then there exists a constant $C=C(C_{0},N,\delta,\mu)$
such that
\[
u_{n}(s)\leq Cs^{-\gamma},\quad v_{n}(s)\leq Cs^{-\xi}\quad\text{for }s\leq
s_{0},
\]
that means
\[
u_{n}(r)\leq C(r^{2-N}-1)^{-\gamma},\quad v_{n}(r)\leq C(r^{2-N}-1)^{-\xi
}\quad\text{for }r\in\left[  r_{0},1\right)  .
\]
Passing to the limit we find that $u,v$ are bounded at the point $\tilde{\rho
}<1$, which is contradictory. Then the claim is proved. Thus $\mathcal{S}$ is
a curve with
\[
(u_{0},v_{0})=\left[  \rho^{\gamma}\left(  \cos\theta,\sin\theta\right)
\cos\theta,\rho^{\xi}\left(  \cos\theta,\sin\theta\right)  \sin\theta\right]
,\qquad\theta\in\left[  0,\pi/2\right]  ,
\]
as a parametric representation.
\end{proof}

\section{Behavior of system (\ref{sab}) near the origin\label{orig}}

\subsection{Formulation as a dynamical system}

In \cite{BGi} the authors study general quasilinear elliptic systems, and in
particular the system%

\begin{equation}
\left\{
\begin{array}
[c]{c}%
-\Delta u=-(u_{rr}+\frac{N-1}{r}u_{r})=\varepsilon_{1}r^{a}v^{\delta},\\
-\Delta v=-(v_{rr}+\frac{N-1}{r}v_{r})=\varepsilon_{2}r^{b}u^{\mu},
\end{array}
\right.  \label{gen}%
\end{equation}
where $\varepsilon_{1}=\pm1$, $\varepsilon_{2}=\pm1$. Near any point $r$ where
$u(r)\neq0,u^{\prime}(r)\neq0$ and $v(r)\neq0$, $v^{\prime}(r)\neq0,$ they
define%
\begin{equation}
X(t)=-\frac{ru_{r}}{u},\qquad Y(t)=-\frac{rv_{r}}{v},\qquad Z(t)=-\varepsilon
_{1}\frac{r^{1+a}v^{\delta}}{u_{r}},\qquad W(t)=-\varepsilon_{2}\frac
{r^{1+b}u^{\mu}}{v_{r}}, \label{xt}%
\end{equation}
with $t=\ln r$, so system (\ref{gen}) becomes%
\begin{equation}
\left\{
\begin{array}
[c]{rcl}%
X_{t} & = & X\left[  X-(N-2)+Z\right]  ,\\
Y_{t} & = & Y\left[  Y-(N-2)+W\right]  ,\\
Z_{t} & = & Z\left[  N+a-\delta Y-Z\right]  ,\\
W_{t} & = & W\left[  N+b-\mu X-W\right]  .
\end{array}
\right.  \label{mn}%
\end{equation}
One recovers $u$ and $v$ by the formulas%
\begin{equation}
u\mathbf{=}r^{-\gamma_{a,b}}\mathbf{(}\left\vert ZX\right\vert )^{1/D}%
\mathbf{(}\left\vert WY\right\vert )^{\delta/D},\mathbf{\qquad}v\mathbf{=}%
r^{-\xi_{a,b}}\left\vert WY\right\vert )^{1/D}\left\vert ZX\right\vert
^{\mu/D}, \label{form}%
\end{equation}
and we notice the relations $\gamma_{a,b}+2+a=\delta\xi_{a,b}$ and $\xi
_{a,b}+2+b=\mu\gamma_{a,b}.$\medskip

As mentioned in \cite{BGi}, system (\ref{mn}) is independent of $\varepsilon
_{i}$, $i=1,2$, and thus it allows to study system (\ref{gen}) in a unified
way. In our case $\varepsilon_{1}=\varepsilon_{2}=-1$, then $XZ=-r^{a+2}%
v^{\delta}/u$ and $YW=r^{b+2}u^{\mu}/v,$ thus we are led to study (\ref{mn})
in the region
\[
\mathcal{R}=\{(X,Y,Z,W)\ |\ XZ\leq0,\ YW\leq0\}.
\]

This system is \textit{quadratic}, and it admits four invariant hyperplanes:
$X=0,Y=0,Z=0,W=0$. The trajectories located on these hyperplanes do not
correspond to a solution of system (\ref{gen}), and they are called
\textit{\textit{nonadmissible}}. System (\ref{mn}) has sixteen fixed points,
including $O=(0,0,0,0)$. The main one is
\[
M_{0}=\left(  X_{0},Y_{0},Z_{0},W_{0}\right)  =\left(  \gamma_{a,b},\xi
_{a,b},N-2-\gamma_{a,b},N-2-\xi_{a,b}\right)  ,
\]
which is interior to $\mathcal{R}$ whenever (\ref{cde}) holds; it corresponds
to the particular solution $(u^{\ast},v^{\ast})$ given in (\ref{zero}). Among
the other fixed points, as we see below,%
\begin{align*}
N_{0}  &  =(0,0,N+a,N+b),\\
R_{0}  &  =\left(  0,-(2+b),N+a+(2+b)\delta,N+b\right)  ,\quad S_{0}=\left(
-(2+a),0,N+a,N+b+(2+a)\mu\right)  ,
\end{align*}
are linked to the regular solutions, and
\[
A_{0}=(N-2,N-2,0,0),\,G_{0}=(N-2,0,0,N+b-(N-2)\mu),\,H_{0}%
=(0,N-2,N+a-(N-2)\delta,0),
\]%
\begin{align*}
P_{0}  &  =(N-2,(N-2)\mu-2-b,0,(N+b-(N-2)\mu)),\\
\ Q_{0}  &  =((N-2)\delta-2-a,N-2,N+a-(N-2)\delta,0),
\end{align*}
and $M_{0}$are linked to the large solutions near $0.$ Notice that
$P_{0}\not \in \mathcal{R}$ for $\frac{2+b}{N-2}<\mu<\frac{N+b}{N-2}$ and
$Q_{0}\not \in \mathcal{R}$ for $\frac{2+a}{N-2}<\delta<\frac{N+a}{N-2}$. We
are not concerned by the other fixed points
\[
I_{0}=(N-2,0,0,0),J_{0}\!=(0,N-2,0,0),K_{0}\!=(0,0,N+a,0),L_{0}%
\!=(0,0,0,N+b),
\]
which correspond to non admissible solutions, from \cite{BGi}, and
\[
C_{0}=\left(  0,-(2+b),0,N+b\right)  ,\qquad D_{0}=\left(
-(2+a),0,N+a,0\right)  ,
\]
which can be shown as non admissible as $t\rightarrow-\infty.\medskip$

\subsection{Regular solutions}

First we give an alternative proof of Proposition \ref{ereg}.

\begin{proposition}
\label{ereg2} Assume (\ref{abn}) and $D\neq0.$ Then a solution $(u,v)$ is
regular\textrm{ }with initial data $(u_{0},v_{0}),u_{0},v_{0}>0$ (resp.
$(u_{0},0)$,$u_{0}>0,$ resp. $(0,v_{0}),v_{0}>0$), if and only the
corresponding solution $(X,Y,Z,W)$ converges to $N_{0}$ (resp. $R_{0},$ resp.
$S_{0}$) as $t\rightarrow-\infty$. For any $u_{0},v_{0}\geq0,$ not both $0$,
there exists a unique local regular solution $(u,v)$ with initial data
$(u_{0},v_{0}).$
\end{proposition}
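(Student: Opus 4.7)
The plan is to prove Proposition \ref{ereg2} by invoking the hyperbolic structure of system (\ref{mn}) at the fixed points $N_0$, $R_0$ and $S_0$. For the forward implication I would directly compute the asymptotic behavior of $(X, Y, Z, W)$ for a regular solution. If $u_0, v_0 > 0$, the integral form at the start of the section gives $u(r) = u_0 + O(r^{2+a})$, $u_r(r) \sim r^{1+a} v_0^\delta/(N+a)$, and symmetric expressions for $v, v_r$; substituting into (\ref{xt}) yields $X, Y \to 0$ and $Z \to N+a$, $W \to N+b$, i.e., convergence to $N_0$. If $v_0 = 0 < u_0$, iterating the integral formula produces $v(r) \sim r^{2+b} u_0^\mu/((2+b)(N+b))$ and $u_r(r) = O(r^{1+a+(2+b)\delta})$, and direct substitution gives $(X, Y, Z, W) \to R_0$; the case $u_0 = 0 < v_0$ is symmetric and yields $S_0$.

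For the reverse implication and the existence/uniqueness, I would first compute the linearization of (\ref{mn}) at each fixed point. At $N_0$ the Jacobian is quasi-triangular with diagonal $(2+a, 2+b, -(N+a), -(N+b))$; under (\ref{abn}) this gives exactly two positive and two negative eigenvalues, so $N_0$ is hyperbolic with a $2$-dimensional unstable manifold $\mathcal{W}^u(N_0)$. At $R_0$ a block expansion of the characteristic polynomial factors it into roots $2+a+(2+b)\delta$, $-(2+b)$, $-(N+b)$ and $-(N+a+(2+b)\delta)$, with a single positive eigenvalue, so $\mathcal{W}^u(R_0)$ is $1$-dimensional; the computation at $S_0$ is analogous.

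By the unstable manifold theorem, trajectories of (\ref{mn}) that converge to $N_0$ as $t \to -\infty$ are exactly those lying on $\mathcal{W}^u(N_0)$, which is smoothly parameterized by its two linear eigendirections. Matching the expansions of the first step against this parameterization, the two eigencoefficients are invertible smooth functions of $(u_0, v_0) \in (0,\infty)^2$, so one obtains a local diffeomorphism between initial data and trajectories, and the recovery formulas (\ref{form}) then produce a unique regular solution for each $(u_0, v_0)$. The one-parameter cases $(u_0, 0)$ and $(0, v_0)$ are handled identically with the $1$-dimensional $\mathcal{W}^u(R_0)$ and $\mathcal{W}^u(S_0)$, the single unstable direction matching the scaling invariance (\ref{scaling}).

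The main obstacle is verifying that the trajectory-to-solution map (\ref{form}) really produces a \emph{genuine} regular solution, not merely a formal one: one must show that the exponential rate of convergence on the unstable manifold is fast enough for $u, v$ to extend continuously to $r = 0$ with the prescribed nonnegative values, and that the reconstructed $u, v$ remain nonnegative (positive, in the generic case) in a full neighborhood of $r=0$. Since $R_0$ and $S_0$ sit on the boundary $\partial \mathcal{R}$, additional care is required to ensure that the relevant trajectory stays in $\mathcal{R}$ and that the signs of $X, Y, Z, W$ are consistent with a nonnegative $(u, v)$ having $u_r, v_r \geq 0$ near $0$.
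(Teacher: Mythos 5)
Your proposal is correct and follows essentially the same route as the paper: compute the asymptotics of a regular solution from the integral formulation to identify the limit point ($N_{0}$, $R_{0}$ or $S_{0}$), then linearize (\ref{mn}) at these fixed points (your eigenvalues agree with the paper's) and use the dimension of the unstable manifold together with the recovery formulas (\ref{form}) and the scaling invariance (\ref{scaling}) to obtain existence and uniqueness. The verifications you flag at the end are precisely the steps the paper supplies --- at $R_{0}$ it checks that the one-dimensional unstable manifold is not contained in the invariant hyperplane $\{X=0\}$, so there is exactly one admissible branch (with $X<0$, $Z>0$), and it reads off from (\ref{form}) that $u$ tends to a positive limit while $v=O(e^{2t})$ --- whereas for the interior case $u_{0},v_{0}>0$ the paper does not redo the analysis at $N_{0}$ but cites \cite{BGi}.
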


\begin{proof}
The proof in the case $u_{0},v_{0}>0$ is done in \cite[Proposition 4.4]{BGi}.
Suppose $u_{0}>0=v_{0},$ and consider any regular solution $(u,v)$ with
initial data $(u_{0},0).$ We find
\begin{align*}
v^{\prime}  &  =\frac{u_{0}^{\mu}}{N+b}r^{1+b}(1+o(1)),\qquad v=\frac
{u_{0}^{\mu}}{(N+b)(2+b)}r^{2+b}(1+o(1)),\\
(r^{N-1}u^{\prime})^{\prime}  &  =\frac{u_{0}^{\mu^{\delta}}%
r^{N-1+a+(2+b)\delta}}{((N+b)(2+b))^{\delta}}(1+o(1)),\ u^{\prime}=\frac
{u_{0}^{\delta\mu}r^{1+a+(2+b)\delta}}{((N+b)(2+b))^{\delta}(N+a+(2+b)\delta
)}(1+o(1));
\end{align*}
then from (\ref{xt}) the corresponding trajectory $(X,Y,Z,W)$ converges to
$R_{0}$ as $t\rightarrow-\infty.$ Next we show that there exists a unique
trajectory converging to $R_{0}.\ $ We write
\[
R_{0}=\left(  0,\bar{Y},\bar{Z},\bar{W}\right)  =\left(
0,-(2+b),N+a+(2+b)\delta,N+b\right)  .
\]
Under our assumptions it lies in $\mathcal{R}$. Setting $Y=\bar{Y}+\tilde
{Y},Z=\bar{Z}+\tilde{Z},W=\bar{W}+\tilde{W},$ the linearization at $R_{0}$
gives%
\begin{equation}
X_{t}=\lambda_{1}X,\quad\tilde{Y}_{t}=\bar{Y}\left[  \tilde{Y}+\tilde
{W}\right]  ,\quad Z_{t}=\bar{Z}\left[  -\delta\tilde{Y}-\tilde{Z}\right]
,\quad W_{t}=\bar{W}\left[  -\mu X-\tilde{W}\right]  ;\nonumber
\end{equation}
the eigenvalues are
\[
\lambda_{1}=2+a+\delta(2+b)>0,\quad\lambda_{2}=-(2+b)<0,\quad\lambda_{3}%
=-\bar{Z}<0,\quad\lambda_{4}=-(N+b)<0.
\]
The unstable manifold $\mathcal{V}_{u}$ has dimension 1 and $\mathcal{V}%
_{u}\cap\left\{  X=0\right\}  =\emptyset$, hence there exist precisely one
admissible trajectory such that $X<0$ and $Z>0$. Moreover it satisfies
\[
\lim_{t\rightarrow-\infty}e^{-\lambda_{1}t}X=C_{1}>0,\lim_{t\rightarrow
-\infty}Y=\bar{Y},\lim_{t\rightarrow-\infty}Z=\bar{Z},\lim_{t\rightarrow
-\infty}W=\bar{W}.
\]
Then from (\ref{form}) $u$ has a positive limit $\overline{u}_{0}$, and
$v\mathbf{=}O(e^{2t}),$ thus $v$ tends to 0; then $(u,v)$ is regular with
initial data $\left(  \overline{u}_{0},0\right)  $. By (\ref{scaling}) we
obtain existence for any $(u_{0},0)$ and the uniqueness still holds. Similarly
the solutions with initial data $(0,v_{0})$ correspond to $S_{0}.$
\end{proof}

\subsection{Local existence of large solutions near $0$}

Next we prove the existence of different types of local solutions large at 0,
by linearization around the fixed points $A_{0},G_{0},H_{0},P_{0},Q_{0}$. For
simplicity we do not consider the limit cases, where one of the eigenvalues of
the linearization is 0, corresponding to behaviors of $u,v$ of logarithmic
type. All the following results extend by symmetry, after exchanging
$u,\delta,a,\gamma_{a,b}$ and $v,\mu,b,\xi_{a,b}$. \medskip

\begin{proposition}
\label{ex1} Assume $N>2$.\medskip\ 

(i) If $\delta<\frac{N+a}{N-2}$ and $\mu<\frac{N+b}{N-2}$, then there exist
solutions $(u,v)$ to (\ref{sab}) such that
\begin{equation}
\lim_{r\rightarrow0}r^{N-2}u=\alpha>0,\qquad\lim_{r\rightarrow0}r^{N-2}%
v=\beta>0. \label{ao}%
\end{equation}
If $\delta>\frac{N+a}{N-2}$ or $\mu>\frac{N+b}{N-2}$, there exist no such
solutions. \medskip

(ii) Let $\gamma_{a,b}>N-2$ and let $\mu<\frac{2+b}{N-2}$ or $\mu>\frac
{N+b}{N-2}$. Then there exist solutions $(u,v)$ of (\ref{sab}) such that
\begin{equation}
\lim_{r\rightarrow0}r^{N-2}u=\alpha>0,\qquad\lim_{r\rightarrow0}%
r^{(N-2)\mu-(2+b)}v=\beta(\alpha)>0, \label{po}%
\end{equation}
with $\beta(\alpha)=\alpha^{\mu}/((N-2)\mu-N-b)((N-2)\mu-2-b)$. If
$\gamma_{a,b}<N-2$, there exist no such solutions. \medskip

(iii) If $\mu<\frac{2+b}{N-2}$ then there exist solutions $(u,v)$ of
(\ref{sab}) such that%
\begin{equation}
\lim_{r\rightarrow0}r^{N-2}u=\alpha>0,\qquad\lim_{r\rightarrow0}v=\beta>0.
\label{go}%
\end{equation}
If $\mu>\frac{2+b}{N-2}$ there exist no such solutions.
\end{proposition}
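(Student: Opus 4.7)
The plan is to translate each asymptotic behavior in (\ref{ao}), (\ref{po}), (\ref{go}) via the change of variables (\ref{xt}) into the statement that the trajectory $(X,Y,Z,W)(t)$ of the quadratic system (\ref{mn}) converges, as $t=\ln r\to-\infty$, to a specific fixed point. A direct computation from the definitions (\ref{xt}) shows that (\ref{ao}), (\ref{po}), (\ref{go}) correspond respectively to convergence to
\begin{align*}
A_0 &= (N-2,N-2,0,0),\\
P_0 &= (N-2,\,(N-2)\mu-(2+b),\,0,\,N+b-(N-2)\mu),\\
G_0 &= (N-2,\,0,\,0,\,N+b-(N-2)\mu),
\end{align*}
and conversely formula (\ref{form}) recovers the announced asymptotics of $u,v$ from any such trajectory.

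I would then linearize (\ref{mn}) at each fixed point. The block structure makes the Jacobians essentially triangular, with eigenvalues
\begin{align*}
\text{at } A_0:\quad & N-2,\ N-2,\ N+a-\delta(N-2),\ N+b-\mu(N-2);\\
\text{at } P_0:\quad & N-2,\ D(\gamma_{a,b}-(N-2)),\ (N-2)\mu-(2+b),\ (N-2)\mu-(N+b);\\
\text{at } G_0:\quad & N-2,\ N+a,\ 2+b-(N-2)\mu,\ (N-2)\mu-(N+b),
\end{align*}
where the second eigenvalue at $P_0$ uses the identity $N+a-\delta Y_0=D(\gamma_{a,b}-(N-2))$, a consequence of $\gamma_{a,b}D=(2+a)+(2+b)\delta$. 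Under the hypotheses of (i) all four eigenvalues at $A_0$ are strictly positive, so $A_0$ is a source and an entire neighborhood of trajectories converges to $A_0$ as $t\to-\infty$; through (\ref{form}) and the scaling (\ref{scaling}) every prescribed pair $\alpha,\beta>0$ is realized. Under (ii), either all four eigenvalues at $P_0$ are positive (sub-case $\mu>(N+b)/(N-2)$, using $\gamma_{a,b}>N-2$) or exactly the first two are positive (sub-case $\mu<(2+b)/(N-2)$); in both sub-cases $Y_0W_0<0$, so $P_0\in\mathcal{R}$, and the unstable manifold through $P_0$ supplies admissible trajectories producing (\ref{po}). Case (iii) is parallel: $\mu<(2+b)/(N-2)$ gives three positive eigenvalues at $G_0$, and the three-dimensional unstable manifold contains admissible trajectories (with $Y<0$, so that $YW<0$ near $G_0$) yielding (\ref{go}).

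For the nonexistence statements I would argue directly on (\ref{sab}) by integrability. In (i), $u\sim\alpha r^{-(N-2)}$ forces $r^{N-1}u_r\to-(N-2)\alpha$, so from the first equation of (\ref{sab}) the integral $\int_0 s^{N-1+a}v^\delta\,ds$ must converge; with $v\sim\beta r^{-(N-2)}$ the integrand is $\sim s^{N-1+a-\delta(N-2)}$, not integrable at $0$ when $\delta>(N+a)/(N-2)$. In (ii), a solution with (\ref{po}) and $\gamma_{a,b}<N-2$ would give, via (\ref{form}) together with the fact that $X,Y,W$ remain bounded away from $0$ near $P_0$, $|Z|\sim Cr^{D(\gamma_{a,b}-(N-2))}\to\infty$, contradicting $Z_0=0$. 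The nonexistence in (iii) when $\mu>(2+b)/(N-2)$ is the same integrability test applied to $\Delta v=r^b u^\mu$: with $u\sim\alpha r^{-(N-2)}$ the integrand $s^{N-1+b-(N-2)\mu}$ fails to be integrable at $0$, preventing $v$ from having a finite positive limit.

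The main obstacle is the admissibility verification, since $P_0$ and $G_0$ lie on the boundary $\{Z=0\}$ (and partly on $\{Y=0\}$) of $\mathcal{R}$. One must ensure that the unstable manifold genuinely reaches the interior of $\mathcal{R}$ rather than being trapped in the non-admissible invariant hyperplane $\{Z=0\}$. The key point is that the eigenvector attached to the eigenvalue $D(\gamma_{a,b}-(N-2))$ has non-zero $Z$-component, so the unstable manifold acquires an admissible branch $\{Z<0\}$ precisely when $\gamma_{a,b}>N-2$---the hypothesis of (ii)---and similarly the hypotheses $\delta<(N+a)/(N-2)$, $\mu<(N+b)/(N-2)$ in (i) ensure that the $Z$- and $W$-eigendirections at $A_0$ are unstable, so admissibility is propagated off the hyperplanes $\{Z=0\}$, $\{W=0\}$.
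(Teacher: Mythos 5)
Your existence arguments follow essentially the same route as the paper: the same fixed points $A_0$, $P_0$, $G_0$, the same eigenvalue computations (including the identity $N+a-\delta Y_*=D(\gamma_{a,b}-(N-2))$), the same dimension counts for the unstable manifolds, and the same key admissibility check that the unstable manifold is not trapped in the invariant hyperplanes $\{Z=0\}$, $\{Y=0\}$ — in the two-dimensional case of (ii) the paper phrases this as $\dim(\mathcal{V}_u\cap\{Z=0\})=1<\dim\mathcal{V}_u$, which is your eigenvector observation. For the nonexistence statements you depart from the paper: the paper argues that $\mathcal{V}_u$ is entirely contained in a nonadmissible hyperplane, whereas you integrate the equations directly. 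Your route is more elementary and has the advantage of not presupposing that a solution with the prescribed asymptotics yields a trajectory converging to the fixed point (a point the paper leaves implicit); the paper's route keeps everything inside the dynamical-system picture.

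There is, however, one concrete error in your nonexistence argument for (iii). With $u\sim\alpha r^{-(N-2)}$, the integrand $s^{N-1+b-(N-2)\mu}$ fails to be integrable at $0$ only when $\mu\geq\frac{N+b}{N-2}$; for $\frac{2+b}{N-2}<\mu<\frac{N+b}{N-2}$ it \emph{is} integrable, so your stated contradiction does not arise in that sub-range. The argument is salvaged by one more integration: in that sub-range one gets $r^{N-1}v_r\sim C\,r^{N+b-(N-2)\mu}$ with $C>0$, hence $v_r\sim C\,r^{1+b-(N-2)\mu}$, and the exponent $1+b-(N-2)\mu<-1$ precisely when $\mu>\frac{2+b}{N-2}$, so $v_r$ is not integrable at $0$ and $v$ cannot have a finite limit. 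A similar caution applies to your (ii) nonexistence argument: "$|Z|\to\infty$ contradicts $Z_0=0$" presumes the trajectory converges to $P_0$; it is cleaner to note that $(r^{N-1}u_r)_r=r^{N-1+a}v^{\delta}\sim C r^{N-1+a-\delta Y_*}$ with $N+a-\delta Y_*=D(\gamma_{a,b}-(N-2))<0$, which is incompatible with $r^{N-1}u_r$ having the finite limit $-(N-2)\alpha$ forced by (\ref{po}).
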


\begin{proof}
(i) We study the behaviour of the solutions of (\ref{mn}) near $A_{0}$ as
$t\rightarrow-\infty$. The linearization at $A_{0}$ gives, with $X=N-2+\tilde
{X},Y=N-2+\tilde{Y},$%
\[
\tilde{X}_{t}=(N-2)\left[  \tilde{X}+Z\right]  ,\quad\tilde{Y}_{t}%
=(N-2)\left[  \tilde{Y}+W\right]  ,\quad Z_{t}=\lambda_{3}Z,\quad
W_{t}=\lambda_{4}W,
\]
with eigenvalues
\[
\lambda_{1}=\lambda_{2}=(N-2)>0,\;\lambda_{3}=N+a-(N-2)\delta,\;\lambda
_{4}=N+b-(N-2)\mu.
\]
If $\delta<\frac{N+a}{N-2}$ and $\mu<\frac{N+b}{N-2}$, then we have
$\lambda_{3},\lambda_{4}>0;$ the unstable manifold $\mathcal{V}_{u}$ has
dimension $4,$ then there exists an infinity of trajectories converging to
$A_{0}$ as $t\rightarrow-\infty$, interior to $\mathcal{R},$ then admissible,
with $Z,W<0$. The solutions satisfy $\lim\limits_{t\rightarrow-\infty
}e^{-\lambda_{3}t}Z=Z_{0}<0$ and $\lim\limits_{t\rightarrow-\infty}%
e^{-\lambda_{4}t}W=W_{0}<0$, with $\lim\limits_{t\rightarrow-\infty}%
X=\lim\limits_{t\rightarrow-\infty}Y=N-2.$ Hence from (\ref{form}), the
corresponding solutions $(u,v)$ of (\ref{sab}) satisfy (\ref{ao}). If
$\delta>\frac{N+a}{N-2}$ or $\mu>\frac{N+b}{N-2}$, then $\lambda_{3}<0$ or
$\lambda_{4}<0$, respectively, and $\mathcal{V}_{u}$ has at most dimension
$3$, and it satisfies $Z=0$ or $W=0$ respectively. Therefore there is no
admissible trajectory converging at $-\infty$.\medskip\ 

(ii) Here we study the behaviour near $P_{0}$. Setting $P_{0}=\left(
N-2,Y_{\ast},0,W_{\ast}\right)  $, with
\[
Y_{\ast}=(N-2)\mu-2-b,\qquad W_{\ast}=N+b-(N-2)\mu,
\]
the linearization at $P_{0}$ gives, with $X=N-2+\tilde{X},Y=Y_{\ast}+\tilde
{Y}$, $W=W_{\ast}+\tilde{W}$,
\[
\tilde{X}_{t}=(N-2)\left[  \tilde{X}+Z\right]  ,\quad\tilde{Y}_{t}=Y_{\ast
}\left[  \tilde{Y}+\tilde{W}\right]  ,\quad Z_{t}=\lambda_{3}Z,\quad\tilde
{W}_{t}=W_{\ast}\left[  -\mu\tilde{X}-\tilde{W}\right]  .
\]
By direct computation we obtain that the eigenvalues are
\[
\lambda_{1}=N-2>0,\quad\lambda_{2}=Y_{\ast},\quad\lambda_{3}=N+a-\delta
Y_{\ast}=D(\gamma_{a,b}-(N-2)),\quad\lambda_{4}=-W_{\ast}.
\]
Assume first that $\gamma_{a,b}>N-2$. Then $\lambda_{3}>0$. If $\mu>\frac
{N+b}{N-2}$, then also $\lambda_{2},\lambda_{4}>0$ and thus $\mathcal{V}_{u}$
has dimension $4$, then there exist an infinity of admissible trajectories,
with $Z<0,$ converging as $t\rightarrow-\infty$. If $\mu<\frac{2+b}{N-2}$,
then $\lambda_{2},\lambda_{4}<0$, thus $\mathcal{V}_{u}$ has dimension $2$,
and $\mathcal{V}_{u}\cap\left\{  Z=0\right\}  $ has dimension $1$, thus there
also exist an infinity of admissible trajectories with $Z<0$ converging when
$t\rightarrow-\infty$. Then $\lim\limits_{t\rightarrow-\infty}e^{-\lambda
_{3}t}Z=C_{3}<0$, $\lim\limits_{t\rightarrow-\infty}X=N-2$, $\lim
\limits_{t\rightarrow-\infty}Y=Y_{\ast}$ and $\lim\limits_{t\rightarrow
-\infty}W=W_{\ast}$, thus (\ref{form}), $(u,v)$ satisfy (\ref{po}). If
$\gamma_{a,b}<N-2$, then $\lambda_{3}<0$ and $\mathcal{V}_{u}=\mathcal{V}%
_{u}\cap\left\{  Z=0\right\}  $ and there is no admissible trajectory
converging when $t\rightarrow-\infty$. \medskip

(iii) We consider the behaviour near $G_{0}$. The linearization at $G_{0}$
gives, with $X=N-2+\tilde{X},W=N+b-(N-2)\mu+\tilde{W}$,
\begin{align*}
\tilde{X}_{t}  &  =(N-2)\left[  \tilde{X}+Z\right]  ,\quad Y_{t}%
=(2+b-(N-2)\mu)Y,\\
Z_{t}  &  =(N+a)Z,\quad W_{t}=(N+b-(N-2)\mu)\left[  -\mu\tilde{X}-\tilde
{W}\right]  ,
\end{align*}
and the eigenvalues are
\[
\lambda_{1}=N-2>0,\;\lambda_{2}=2+b-(N-2)\mu,\;\lambda_{3}=N+a>0,\;\lambda
_{4}=(N-2)\mu-N-b.
\]
If $\mu<\frac{2+b}{N-2}$, then $\lambda_{2},\lambda_{4}<0$. Then
$\mathcal{V}_{u}$ has dimension 3, and $\mathcal{V}_{u}\cap\left\{
Y=0\right\}  $ and $\mathcal{V}_{u}\cap\left\{  Z=0\right\}  $ have dimension
2. This implies that ${\mathcal{V}}_{u}$ must contain admissible trajectories
such that $X>0$ (because $N-2>0$), $Y<0$, $Z<0$ and $W>0$ (because
$N+b-(N-2)\mu>0$). Clearly, $\lim\limits_{t\rightarrow-\infty}X=N-2$ and
$\lim\limits_{t\rightarrow-\infty}W=N+b-(N-2)\mu>0$. Moreover, $\lim
\limits_{t\rightarrow-\infty}e^{-\lambda_{2}t}Y=C_{2}<0$ and $\lim
\limits_{t\rightarrow-\infty}e^{-\lambda_{3}t}Z=C_{3}<0$, thus (\ref{go})
follows from (\ref{form}). Let now $\mu>\frac{2+b}{N-2}$, so that $\lambda
_{2}<0$. If $\mu<\frac{N+b}{N-2}$, then $\lambda_{4}<0$, $\mathcal{V}_{u}$ has
dimension 2, and also $\mathcal{V}_{u}\cap\left\{  Y=0\right\}  $, hence
$\mathcal{V}_{u}=\mathcal{V}_{u}\cap\left\{  Y=0\right\}  $, and there exists
no admissible trajectory. If $\mu>\frac{N+b}{N-2}$, then $\lambda_{4}>0$,
$\mathcal{V}_{u}$ has dimension 3 and also $\mathcal{V}_{u}\cap\left\{
Y=0\right\}  $, there is no admissible trajectory.\medskip
\end{proof}

\begin{remark}
If $\mu>\frac{N+b}{N-2},$ in (ii) the two functions $u,v$ are large near $0.$
If $\mu<\frac{2+b}{N-2},$ then $u$ is large near $0$ and $v$ tends to
$0.\medskip$
\end{remark}

Next we study the behavior near $M_{0},$ which is the most interesting one.

\begin{proposition}
\label{mani}Assume $N\geq1$ and (\ref{cde}). Then (up to a scaling) there
exist infinitely many solutions defined near $r=0$ such that
\[
\lim_{r\rightarrow0}r^{\gamma_{a,b}}u=A_{N},\lim_{r\rightarrow0}r^{\xi_{a,b}%
}v=B_{N}.
\]

\end{proposition}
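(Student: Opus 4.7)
The plan is to apply the hyperbolic fixed-point theory to the quadratic autonomous system (\ref{mn}) at the point
\[
M_0=(\gamma_{a,b},\xi_{a,b},N-2-\gamma_{a,b},N-2-\xi_{a,b}).
\]
Under (\ref{cde}) the last two coordinates $Z_0=N-2-\gamma_{a,b}$ and $W_0=N-2-\xi_{a,b}$ are strictly negative, so $M_0$ lies in the interior of $\mathcal{R}$; consequently any trajectory sufficiently close to $M_0$ is admissible. Moreover, the recovery formulas (\ref{form}) show directly that any trajectory with $(X,Y,Z,W)(t)\to M_0$ as $t\to-\infty$ produces a solution with $r^{\gamma_{a,b}}u\to A_N$ and $r^{\xi_{a,b}}v\to B_N$, since the limits of $(|ZX|)^{1/D}(|WY|)^{\delta/D}$ and $(|WY|)^{1/D}(|ZX|)^{\mu/D}$ at $M_0$ match the definitions of $A_N$ and $B_N$ exactly.

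The real content of the statement is therefore the existence of a two-parameter family of trajectories converging to $M_0$ as $t\to-\infty$, because the scaling invariance (\ref{scaling}) acts on the dynamical side as translation in $t$ along a single orbit, and so a one-dimensional family of trajectories would collapse to just one geometrically distinct solution. To produce such a family I would linearize (\ref{mn}) at $M_0$, using the identities $\gamma_{a,b}+2+a=\delta\xi_{a,b}$ and $\xi_{a,b}+2+b=\mu\gamma_{a,b}$ to verify that the constant terms cancel, and then expand the $4\times 4$ characteristic determinant along the first column. This yields the tidy expression
\[
P(\lambda)=(\lambda-\gamma_{a,b})(\lambda-\xi_{a,b})(\lambda+Z_0)(\lambda+W_0)-\mu\delta\,\gamma_{a,b}\xi_{a,b}Z_0W_0.
\]

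Writing $P(\lambda)=\lambda^4+a_3\lambda^3+a_2\lambda^2+a_1\lambda+a_0$, one reads off $a_3<0$, $a_2>0$, $a_1<0$, and $a_0=-\gamma_{a,b}\xi_{a,b}Z_0W_0 D<0$, each sign determined unambiguously by (\ref{cde}) and $D>0$ with no cancellations among the contributing terms. The Routh--Hurwitz criterion then applies mechanically: the auxiliary entries $b_1=a_2-a_1/a_3$ and $c_1=a_1-a_3a_0/b_1$ come out with $b_1>0$ and $c_1<0$, so the first column of the Routh array has sign pattern $+,-,+,-,-$ and exhibits three sign changes, meaning $P$ has exactly three roots with positive real part. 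Hence $\dim\mathcal{W}^u(M_0)=3$, the local unstable manifold yields a two-parameter family (after quotienting by time translation) of admissible trajectories asymptotic to $M_0$ as $t\to-\infty$, and (\ref{form}) converts these into the claimed infinite family of solutions. The main technical step is the Routh--Hurwitz sign bookkeeping, but because the four coefficients of $P$ inherit definite signs from the hypotheses, no delicate estimates are required; the remainder of the argument is a direct application of the stable manifold theorem.
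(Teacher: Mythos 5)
Your proposal is correct and shares the paper's overall architecture: linearize (\ref{mn}) at $M_{0}$, show the unstable manifold is three--dimensional, and convert trajectories converging to $M_{0}$ as $t\to-\infty$ into solutions via (\ref{form}); your characteristic polynomial agrees with the paper's (\ref{vale}). Where you genuinely diverge is in the eigenvalue count. The paper first uses the signs of the coefficients to extract two real roots $\lambda_{3}<0<\lambda_{4}$, rules out purely imaginary roots for the remaining pair by an algebraic identity, and then deduces $\operatorname{Re}\lambda_{1}>0$ by a continuity argument in $(\delta,\mu)$, checking the factorizable case $\delta=\mu$ explicitly. Your Routh--Hurwitz route replaces that homotopy argument by a single finite computation, which is arguably cleaner and avoids the continuity step altogether. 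However, your assertion that the Routh entries have signs determined ``with no cancellations'' is overstated for $b_{1}=a_{2}-a_{1}/a_{3}$: since $a_{1},a_{3}<0$ one has $-a_{1}/a_{3}<0$, so $b_{1}$ is a priori a difference of positive quantities and its sign is not visible from the signs of the $a_{i}$ alone. It does come out positive, but to see this you need the paper's identity $2G_{0}=-E_{0}\left[Y_{0}Z_{0}+X_{0}W_{0}\right]$, which gives
\[
b_{1}=F_{0}+\tfrac{1}{2}\left(Y_{0}Z_{0}+X_{0}W_{0}\right)=Z_{0}W_{0}+X_{0}Y_{0}-X_{0}Z_{0}-Y_{0}W_{0}-\tfrac{1}{2}Y_{0}Z_{0}-\tfrac{1}{2}X_{0}W_{0},
\]
a sum of six terms each positive because $X_{0},Y_{0}>0$ and $Z_{0},W_{0}<0$ under (\ref{cde}). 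Once $b_{1}>0$ is in hand, $c_{1}=a_{1}-a_{3}a_{0}/b_{1}<0$ is immediate, the first column reads $+,-,+,-,-$, and your conclusion of exactly three roots in the open right half-plane (with none on the imaginary axis, since the array does not degenerate) follows. With that one computation supplied, your argument is a complete and slightly more economical alternative to the paper's.
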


\begin{proof}
Setting $X=X_{0}+\tilde{X},Y=Y_{0}+\tilde{Y},Z=Z_{0}+\tilde{Z},W=W_{0}%
+\tilde{W}$, the linearized system is%
\[
\left\{
\begin{array}
[c]{rcl}%
\tilde{X}_{t} & = & X_{0}(\tilde{X}+\tilde{Z}),\\
\tilde{Y}_{t} & = & Y_{0}(\tilde{Y}+\tilde{W}),\\
\tilde{Z}_{t} & = & Z_{0}(-\delta\tilde{Y}-\tilde{Z}),\\
\tilde{W}_{t} & = & W_{0}(-\mu\tilde{X}-\tilde{W}).
\end{array}
\right.
\]
As described in \cite{BGi}, the eigenvalues are the roots $\lambda_{1}%
,\lambda_{2},\lambda_{3},\lambda_{4}$, of the characteristic polynomial%
\begin{align}
f(\lambda)  &  =\det\left(
\begin{array}
[c]{cccc}%
X_{0}-\lambda & 0 & X_{0} & 0\\
0 & Y_{0}-\lambda & 0 & Y_{0}\\
0 & \delta\left\vert Z_{0}\right\vert  & \left\vert Z_{0}\right\vert -\lambda
& 0\\
\mu\left\vert W_{0}\right\vert  & 0 & 0 & \left\vert W_{0}\right\vert -\lambda
\end{array}
\right) \nonumber\\
&  =(\lambda-X_{0})(\lambda+Z_{0})(\lambda-Y_{0})(\lambda+W_{0})-\delta\mu
X_{0}Y_{0}Z_{0}W_{0}, \label{vale}%
\end{align}
where we recall that $X_{0},Y_{0}>0$ and $Z_{0},W_{0}<0$. We write $f$ in the
form
\[
f(\lambda)=\lambda^{4}+E_{0}\lambda^{3}+F_{0}\lambda^{2}+G_{0}\lambda-H_{0},
\]
with
\[
\left\{
\begin{array}
[c]{rcl}%
E_{0} & = & Z_{0}-X_{0}+W_{0}-Y_{0},\\
F_{0} & = & (Z_{0}-X_{0})(W_{0}-Y_{0})-X_{0}Z_{0}-Y_{0}W_{0},\\
G_{0} & = & -Y_{0}W_{0}(Z_{0}-X_{0})-X_{0}Z_{0}(W_{0}-Y_{0}),\\
H_{0} & = & DX_{0}Y_{0}Z_{0}W_{0}.
\end{array}
\right.
\]
We note that $E_{0}<0,\ F_{0}>0$ and $2G_{0}=-E_{0}\left[  Y_{0}Z_{0}%
+X_{0}W_{0}\right]  <0$. From (\ref{dd}) we have $H_{0}>0$, hence $\lambda
_{1}\lambda_{2}\lambda_{3}\lambda_{4}<0$. Hence there exist two real roots
$\lambda_{3}<0<\lambda_{4}$, with
\[
\lambda_{4}>\max(\{X_{0},Y_{0},\left\vert Z_{0}\right\vert ,\left\vert
W_{0}\right\vert \}
\]
from (\ref{vale}), and two roots $\lambda_{1},\lambda_{2}$, which may be real
or complex. From the form of $f(\lambda)$ in (\ref{vale}), we also see easily
that if the roots $\lambda_{1},\lambda_{2}$ are real, they are positive. Next
we claim that $\operatorname{Re}\lambda_{1}>0$. Suppose $\operatorname{Re}%
\lambda_{1}=0$. Then $f(i\operatorname{Im}\lambda_{1})=0,$ then $G_{0}%
^{2}=E_{0}F_{0}G_{0}+E_{0}^{2}H_{0}$, and thus, dividing by $E_{0}$,
\[
0=G_{0}^{2}-E_{0}F_{0}G_{0}+E_{0}^{2}H_{0}=\frac{E_{0}^{2}}{4}\left(  \left[
Y_{0}Z_{0}+X_{0}W_{0}\right]  ^{2}+2\left[  Y_{0}Z_{0}+X_{0}W_{0}\right]
F_{0}-4H_{0}\right)  ,
\]
hence $\left[  Y_{0}Z_{0}+X_{0}W_{0}+F_{0}\right]  ^{2}=F_{0}^{2}+4H_{0}%
>F_{0}^{2};$ but
\[
Y_{0}Z_{0}+X_{0}W_{0}+F_{0}=(X_{0}-W_{0})(Y_{0}-Z_{0})\in(0,F_{0})
\]
which is a contradiction. Since $\operatorname{Re}\lambda_{1}$ is a continuous
function of $(\delta,\mu)$, it is sufficient to find a value $(\mu,\delta)$
satisfying (\ref{cde}) for which it is positive. Taking $\delta=\mu$, the
equation in $\lambda$ reduces to two equations of order $2$:%
\begin{align*}
f(\lambda)  &  =(\lambda-X_{0})^{2}(\lambda-\left\vert Z_{0}\right\vert
)^{2}-\delta^{2}X_{0}^{2}Z_{0}^{2}\\
&  =\left[  \lambda^{2}-(X_{0}+\left\vert Z_{0}\right\vert )\lambda
-(\delta-1)X_{0}\left\vert Z_{0}\right\vert \right]  \left[  \lambda
^{2}-(X_{0}+\left\vert Z_{0}\right\vert )\lambda+(1+\delta)X_{0}\left\vert
Z_{0}\right\vert \right]  ,
\end{align*}
and $X_{0}+\left\vert Z_{0}\right\vert >0$, thus the claim is proved. Then
$\mathcal{V}_{u}$ has dimension $3$ and $\mathcal{V}_{s}$ has dimension $1$.
Hence the result follows.\medskip
\end{proof}

\begin{remark}
In the case $N=1$, two roots are explicit: $\lambda_{3}=-1,\lambda
_{4}=2+\gamma+\xi$, and $\lambda_{1},\lambda_{2}$ are the roots of equation
\begin{equation}
\lambda^{2}-(1+\gamma+\xi)\lambda+2(1+\gamma)(1+\xi)=0. \label{vvv}%
\end{equation}
The 4 roots are real if $(1+\gamma+\xi)^{2}-8(1+\gamma)(1+\xi)\geq0$, that
means
\[
(\delta\mu+3+2\mu+2\delta)^{2}-8(\mu\delta+2\delta+1)(\mu\delta+2\mu+1)\geq0,
\]
which is not true for $\delta=\mu$, but is true for example when $\delta/\mu$
is large enough. The roots of equation (\ref{vvv}) and the roots of equation
(\ref{eqla}) relative to the linearization of system (\ref{T0}) at $m_{0}$ are
linked by the relations $\ell_{1}=\lambda_{1}/\left\vert Z_{0}\right\vert ,$
$\ell_{2}=\lambda_{2}/\left\vert Z_{0}\right\vert .$ Indeed $M_{0}=M_{0,1}%
\ $defined at (\ref{poi}) satisfies relation (\ref{hon}) with $\varpi=0$, thus
$(X_{0},Y_{0},Z_{0})$ is a fixed point of system (\ref{A1}) and the
linearization of (\ref{A1}) at this point gives the eigenvalues $-1,\lambda
_{1},\lambda_{2}$. The point $m_{0}$ is the image of $(X_{0},Y_{0},Z_{0})$ by
the transformation (\ref{the}), which divides the eigenvalues by $\left\vert
Z_{0}\right\vert $, due to the change in time $t\mapsto\tau$.
\end{remark}

\subsection{Global results}

Here we prove our second main result.\medskip

\begin{proof}
[Proof of Theorem \ref{global}]\noindent From the proof of Proposition
\ref{mani}, the linearization at $M_{0}$ admits a unique real eigenvalue
$\lambda_{3}<0$. From (\ref{vale}) a generating eigenvector $(u_{1}%
,u_{2},u_{3},u_{4})$ satisfies $u_{1}u_{3}<0$ and $u_{2}u_{4}<0$, and hence it
is of the form $\vec{u}=(-\alpha^{2},-\beta^{2},\sigma^{2},\rho^{2})$, or
$-\vec{u}$. There exist precisely two trajectories $\mathcal{T}_{\vec{u}}$ and
$\mathcal{T}_{-\vec{u}}$ converging to $M_{0}$ as $t\rightarrow\infty$ and the
convergence of $X,Y,Z,W$ is monotone near $t=\infty$; from (\ref{form}), the
corresponding solutions $(u,v)$ of system (\ref{sab}) satisfy (\ref{fur}).

We consider the trajectory $\mathcal{T}_{\vec{u}}$ corresponding to $\vec{u}.$
Let us show that the convergence is monotone in all $\mathbb{R}$. Notice that
neither of the components can vanish, since system (\ref{sab}) is of
Kolmogorov type. Near $t=\infty$, $X$ and $Y$ are increasing, and $Z,W$ are
decreasing. Suppose that there exists a greatest value $t_{1}$ such that $X$
has a minimum local at $t_{1}$, hence
\[
X_{tt}(t_{1})=X(t_{1})Z_{t}(t_{1})\geq0,\qquad Z(t_{1})=N-2-X(t_{1}),
\]
thus $Z_{t}(t_{1})\geq0$ . Then there exists $t_{2}\geq t_{1}$ such that
$Z_{t}(t_{2})=0$, and
\[
Z_{tt}(t_{2})=-\delta Z(t_{2})Y_{t}(t_{2})\leq0,\qquad Z(t_{2})=N+a-\delta
Y(t_{2}),
\]
then $Y_{t}(t_{2})\leq0$. There exists $t_{3}\geq t_{2}$ such that
$Y_{t}(t_{3})=0$, and%
\[
Y_{tt}(t_{3})=Y(t_{3})W_{t}(t_{3})\geq0,\qquad Y(t_{3})=N-2-W(t_{3}).
\]
There exists $t_{4}\geq t_{3}$ such that $W_{t}(t_{4})=0$ and $W_{tt}%
(t_{4})=-W(t_{4})X_{t}(t_{4})\leq0$. From the definition of $t_{1},$ this
implies $t_{4}=t_{1},$ and then all the conditions above imply that
$(X,Y,Z,W)(t_{1})=M_{0},$ which is impossible. Hence $X$ stays strictly
monotone, and similarly $Y,Z,W$ also stay strictly monotone. Since $X,Y>0$,
and $Y,Z<0$, then $\mathcal{T}_{\vec{u}}$ is bounded, hence defined on
$\mathbb{R}$ and converges to some fixed point $L=(l_{1},l_{2},l_{3},l_{4})$
of the system as $t\rightarrow-\infty$ and necessarily $l_{1}<X_{0}%
,l_{2}<Y_{0},l_{3}>Z_{0},l_{4}>W_{0}$.\medskip

$\bullet$ Case $N>2$. First we note that along $\mathcal{T}_{\vec{u}}$ we
always have $X,Y>N-2$. Indeed, if at some point $t$ we have $X(t)=N-2$, then
$X_{t}(t)=(N-2)Z(t)<0$, which is contradictory. Hence the possible values for
$L$ are $A_{0}$, or $P_{0}$ when $\mu\geq\frac{N+b}{N-2}$, or $Q_{0}$ when
$\delta\geq\frac{N+a}{N-2},$ since $I_{0}$ is nonadmissible. By hypothesis,
$\gamma_{a,b}>N-2,$ then either $\mu<\frac{N+b}{N-2}$ or $\delta<\frac
{N+a}{N-2}.$ We can assume that $\delta<\frac{N+a}{N-2}$. Then $Q_{0}%
\not \in \mathcal{R}$, then $L=A_{0}$ or $P_{0}$. When $\mu<\frac{N+b}{N-2}$,
then $L=A_{0}$. When $\mu>\frac{N+b}{N-2}$, from Proposition \ref{ex1}(i), we
have $L\neq A_{0}$, thus $L=P_{0}$. In the limit case $\mu=\frac{N+b}{N-2}$,
we find $P_{0}=A_{0}$. From the linearization at $A_{0}$ we have
\[
\lambda_{1}=\lambda_{2}=N-2>0,\;\lambda_{3}=N+a-(N-2)\delta>0,\;\lambda
_{4}=0.
\]
Coming back to the proof of Proposition \ref{ex1}(i), we find that the
convergence of $Z$ and $\tilde{X}=X-(N-2)$ to $0$ are exponential. From the
fourth equation in (\ref{mn}) we see that $W_{t}+W^{2}>0$, hence $-1/W\leq
C|t|$ near $-\infty$. Then, there exists $m>0$ such that
\[
W_{t}=W^{2}(-1-\mu W^{-1}\tilde{X})=W^{2}(-1+O(e^{mt});
\]
integrating over $(t,t_{0})$, $t_{0}<0$, we obtain that $W(t)=t^{-1}%
+O(t^{-2})$. In turn we estimate $Y$; setting $\overline{Y}=\tilde{Y}+W$, then
$\overline{Y}_{t}=(N-2)\overline{Y}+\overline{Y}(\overline{Y}-W)+W(-\mu
\tilde{X}-W),$ and thus
\[
\overline{Y}_{t}=((N-2)+\varepsilon(t))\overline{Y}+O(t^{-2}),
\]
implying $\overline{Y}=O(t^{-2})$ and thus $Y=N-2-t^{-1}+O(t^{-2}).$ Next we
find that $Z_{t}/Z=\lambda_{3}+t^{-1}+O(t^{-2}),$ which yields $\lim
_{t\rightarrow-\infty}e^{-\lambda_{3}t}|t|^{-\delta}|Z|=C>0.$ Finally, by
replacing in (\ref{form}), and deduce the behavior of $u$ and $v$ as claimed:
\[
\lim_{r\rightarrow0}r^{N-2}u=C_{1}>0\quad\text{and}\quad\lim_{r\rightarrow
0}r^{N-2}|\log(r)|^{-1}v=C_{2}>0.
\]

$\bullet$ Case\textbf{\ }$N=2$. Then necessarily $L=O=(0,0,0,0)$. The
eigenvalues of the linearized problem at this point are $0,0,2+a,2+b$. Since
$Z_{t}=Z(2+a-\delta Y-Z)$ and $Y$ and $Z$ tend to $0$ as $t$ tends to
$-\infty$, $Z$ converges exponentially to $0$, and similarly $W$. Since
$X_{t}\leq X^{2}$, it follows that $X\geq C\left\vert t\right\vert ^{-1}$ near
$-\infty$. Then
\[
X_{t}=X^{2}(1+Z/X)=X^{2}(1+O(e^{mt}))
\]
for some $m>0$, hence $X=-1/t+O(t^{-2})$, then the function $t\mapsto
\varphi=u(t)/t$ satisfies $\varphi_{t}/\varphi=O(t^{-2})$, then $\varphi$ has
a finite limit, hence $u(r)/\ln r$ has a finite positive limit, and similarly
for $v.$
\end{proof}

\end{document}